\newtheorem{theorem}{Theorem} 
\newtheorem{lem}{Lemma} 
\newtheorem{cor}[theorem]{Corollary}
\numberwithin{subcase}{case}
\numberwithin{subclaim}{claim}
\theoremstyle{definition}
\newtheorem{defn}{Definition}
\begin{document}
\begin{frontmatter}

\title{Forbidden Subgraphs for Chorded Pancyclicity}

\author[spel]{Megan Cream}
\ead{mcream@spelman.edu}
\address[spel]{Department of Mathematics, Spelman College, 350 Spelman Lane SW, Atlanta, GA 30314}

\author[emory]{Ronald J. Gould\fnref{fn1}}
\ead{rg@emory.edu}
\address[emory]{Department of Mathematics and Computer Science,  Emory University, 400 Dowman Drive, Atlanta, GA 30322}

\author[ken]{Victor Larsen\corref{cor1}}
\ead{vlarsen@kennesaw.edu}
\address[ken]{Department of Mathematics, Kennesaw State University, 1100 S. Marietta Parkway, Marietta, GA 30060}

\cortext[cor1]{Corresponding author}
\fntext[fn1]{Supported by Heilbrun Distinguished Emeritus Fellowship}

\begin{abstract}
We call a graph $G$ {\it pancyclic} if it contains at least one cycle of every possible length $m$, for $3\le m\le |V(G)|$.  
In this paper, we define a new property called {\it chorded pancyclicity}.  
We explore forbidden subgraphs in claw-free graphs sufficient to imply that the graph contains at least one chorded cycle of every possible length $4, 5, \ldots, |V(G)|$.  
In particular, certain paths and triangles with pendant paths are forbidden.
\end{abstract}
\begin{keyword}
Pancyclic \sep chorded cycle\sep forbidden subgraph\sep Hamiltonian
\end{keyword}
\end{frontmatter}

\section{Introduction}

In the past, forbidden subgraphs for hamiltonian properties in graphs have been widely studied (for an overview, see \cite{FG}).  
A graph containing a cycle of every possible length from three to the order of the graph is called pancyclic.
The property of pancyclicity is well-studied.
In this paper, we define the notion of chorded pancyclicity, and study forbidden subgraph results for chorded pancyclicity.   
We consider only $K_{1, 3}$-free (or {\it claw}-free) graphs, and we forbid certain paths and triangles with pendant paths.  

Further, we consider only simple claw-free graphs.  
In this paper we let $G$ be a graph and $P_t$ be a path on $t$ vertices.  
Let $Z_i$ be a triangle with a pendant $P_{i}$ adjacent to one of the vertices of the triangle.  
In particular, we will be considering the graphs $Z_1$ and $Z_2$, shown in Figure \ref{fig:z1andz2}.
\begin{figure}[h]
\centering
\includegraphics[width=7.6cm]{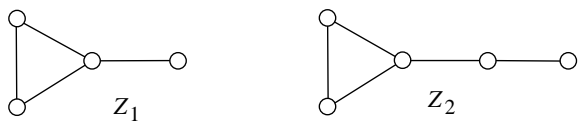}
\caption{The graphs $Z_1$ and $Z_2$}
\label{fig:z1andz2}
\end{figure}
For $S \subseteq V(G)$, let $G[S]$ denote the subgraph of $G$ induced by $S$.
Let $N_G(u)$ denote the set of neighbors of the vertex $u$, that is, the  vertices adjacent to $u$ in the graph $G$.  
Let $N_G[u]=N_G(u)\cup\{u\}$.
A graph is called \emph{traceable} if it contains a hamiltonian path.
We use $H\,\square\, G$ to denote the Cartesian product of $H$ and $G$.
For terms not defined here see \cite{G}.  
We will first note well-known results on forbidden subgraphs for pancyclicity.

 \begin{theorem} Let $R, S$ be connected graphs and let $G$ be a 2-connected graph of order $n\ge10$ such that $G\neq C_n$.  
 Then if $G$ is $\{R,S\}$-free then $G$ is pancyclic for $R=K_{1,3}$ when $S$ is either $P_4, P_5, P_6, Z_1,$ or $Z_2$.  
\label{thm:pan}
 \end{theorem}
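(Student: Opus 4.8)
The plan is to reduce the five cases to two and then run a cycle-shortening argument. First I would observe that, among the listed graphs, $P_4$ and $P_5$ occur as induced subgraphs of $P_6$, while $Z_1$ (a triangle with a single pendant vertex) occurs as an induced subgraph of $Z_2$. Consequently every $\{K_{1,3},P_4\}$-free or $\{K_{1,3},P_5\}$-free graph is already $\{K_{1,3},P_6\}$-free, and every $\{K_{1,3},Z_1\}$-free graph is $\{K_{1,3},Z_2\}$-free, so it suffices to prove the statement for $S=P_6$ and for $S=Z_2$. I would also record at the outset that claw-freeness rules out the usual exceptional non-pancyclic graphs: for instance $\Knn$ contains an induced $K_{1,3}$ once $n\ge 6$, so the only $2$-connected claw-free graph that is Hamiltonian but not pancyclic we must exclude is $C_n$, which the hypothesis already forbids. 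Hamiltonicity of $G$ for each of these pairs follows from the forbidden-pair characterization for the Hamiltonian property (see \cite{FG}), which I would invoke to guarantee a cycle of length $n$.

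Next I would dispatch the base case $C_3$. If $G$ were triangle-free, then any vertex of degree at least $3$ would have three pairwise non-adjacent neighbours and hence induce a $K_{1,3}$; thus a triangle-free claw-free graph has maximum degree at most $2$, forcing it to be a disjoint union of paths and cycles. Since $G$ is $2$-connected this would give $G=C_n$, contradicting the hypothesis, so $G$ contains a triangle.

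The engine of the proof is a shortening step: if $G$ has a cycle of length $\ell$ with $\ell$ large enough, then it has one of length $\ell-1$. I would argue by contradiction. Suppose $m$ is a missing length with $G$ having a cycle $D=v_1v_2\cdots v_{m+1}v_1$ of length $m+1$ but no cycle of length $m$. If some $v_i$ had its two cycle-neighbours adjacent, i.e. $v_{i-1}v_{i+1}\in E(G)$, then bypassing $v_i$ would yield a $C_m$; hence $v_{i-1}v_{i+1}\notin E(G)$ for every $i$. Applying claw-freeness at each $v_i$, whose neighbourhood already contains the non-adjacent pair $v_{i-1},v_{i+1}$, tightly constrains where the remaining neighbours of $v_i$ can lie. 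Using this together with the absence of short chords, I would trace a long induced path along $D$; once the length is at least $11$ this path contains an induced $P_6$, while any off-cycle neighbour forced by the local analysis closes a triangle with a pendant path and so yields an induced $Z_2$, contradicting $S$-freeness. Descending from the Hamiltonian cycle in this way produces the long cycles, and the remaining bounded set of short lengths (down to the triangle already found) I would obtain directly from the rigid local structure that claw-freeness and $S$-freeness impose; assembling these gives cycles of every length $3,4,\ldots,n$, so $G$ is pancyclic.

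The main obstacle is precisely this extraction: showing that an ``unshortenable'' long cycle in a claw-free $S$-free graph must exhibit the forbidden configuration. The delicate point is that claw-freeness forces many local triangles and near-chords, and one must argue that these cannot simultaneously avoid creating a shorter cycle and avoid producing either a clean induced $P_6$ or a triangle-with-pendant-path $Z_2$; which of the two appears is governed by whether the off-cycle neighbours attach so as to leave an induced path or instead close a triangle with a tail. Organising this local case analysis, and checking that the bound $n\ge10$ leaves enough room to embed the forbidden subgraph in every configuration and to cover the small lengths, is where the real work lies.
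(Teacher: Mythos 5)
Your reduction of the five cases to two is sound and matches the logic the paper itself uses elsewhere: since $P_4$ and $P_5$ are induced subgraphs of $P_6$, and $Z_1$ of $Z_2$, any $\{K_{1,3},P_4\}$-, $\{K_{1,3},P_5\}$-, or $\{K_{1,3},Z_1\}$-free graph is automatically $\{K_{1,3},P_6\}$- or $\{K_{1,3},Z_2\}$-free, so the $P_6$ and $Z_2$ cases suffice. But you should know that the paper does not prove this theorem at all: it is a compilation statement, justified immediately afterwards by citing Theorem 4 of \cite{GH} (the $Z_1$ case), the result of \cite{GJ} (the $Z_2$ case), and the results of \cite{FRS} (the $P_4$, $P_5$, $P_6$ cases). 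So you are attempting to reconstruct, from scratch, theorems whose proofs occupy substantial portions of three separate papers.

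That is where the genuine gap lies. Your proposal correctly sets up the standard machinery (Hamiltonicity from the forbidden-pair characterization in \cite{FG}, existence of a triangle, and a cycle-shortening descent), but the engine of the argument is asserted rather than executed. Specifically: (i) the $(m+1)$-cycle $D$ in your shortening step need not be chordless, so ``tracing a long induced path along $D$'' is not available for free --- long chords must be analyzed, and a chord splits $D$ into two cycles whose lengths sum to $m+3$, neither of which need equal $m$; (ii) the claim that an off-cycle neighbour ``closes a triangle with a pendant path and so yields an induced $Z_2$'' conflates containing a $Z_2$ as a subgraph with containing it as an \emph{induced} subgraph --- ruling out the edges that would destroy inducedness is exactly the lengthy case analysis in \cite{GJ} and \cite{FRS}, and in this paper's own Theorem~\ref{thm:Z2}; and (iii) the ``remaining bounded set of short lengths'' (e.g.\ $C_4$, $C_5$) is dismissed in one sentence, yet short cycles are typically the hardest part --- compare the paper's Lemmas~\ref{lem:ChordedC5}, \ref{lem:ChordedC4}, \ref{lem:P6C5*}, \ref{lem:P6C4*}, \ref{lem:P6C6*}, each of which is a multi-page case analysis devoted to a single short cycle length. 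You acknowledge this yourself (``where the real work lies''), which is an honest admission that the proposal is a proof plan, not a proof. Either carry out that case analysis in full, or do what the paper does and cite \cite{GH}, \cite{GJ}, and \cite{FRS}.
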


The proof of a theorem (Theorem 4) in \cite{GH} yields the following result.

\begin{theorem}{\rm\cite{GH}}
If $G$ is a 2-connected graph of order $n\ge 10$ that contains no induced subgraph isomorphic to $K_{1,3}$ or $Z_1$, then $G$ is either a cycle or $G$ is pancyclic.  
\label{thm:z1}
\end{theorem}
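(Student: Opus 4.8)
The plan is to exploit the very restrictive structure forced by simultaneously forbidding $K_{1,3}$ and $Z_1$ (the paw). The single most useful external input is Olariu's characterization of paw-free graphs: a graph is $Z_1$-free if and only if each of its connected components is triangle-free or complete multipartite. Since $G$ is $2$-connected, hence connected, this immediately splits the argument into two cases according to whether or not $G$ contains a triangle.

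First I would treat the triangle-free case. If $G$ is triangle-free and $v$ is any vertex of degree at least $3$, then any three neighbors of $v$ are pairwise non-adjacent (an edge between two of them would close a triangle through $v$), so together with $v$ they induce a $K_{1,3}$, contradicting claw-freeness. Hence $\Delta(G)\le 2$, and a connected graph with maximum degree at most $2$ is a path or a cycle; since $G$ is $2$-connected it must be the cycle $C_n$. This yields the ``$G$ is a cycle'' alternative.

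Next, suppose $G$ contains a triangle. Then $G$ is not triangle-free, so $G$ is complete multipartite, say $G=K_{n_1,\dots,n_k}$. Claw-freeness forces every part to have size at most $2$: if some part contained three vertices $x,y,z$, then any vertex from another part (one exists since $k\ge 2$) would be adjacent to all of $x,y,z$ while $x,y,z$ remain pairwise non-adjacent, producing an induced $K_{1,3}$. Because $G$ has a triangle we also have $k\ge 3$ (and in fact $k\ge 5$, since $n\ge 10$ with all parts of size at most $2$). To finish I would invoke the standard fact that a complete multipartite graph $K_{m_1,\dots,m_j}$ on $m\ge 3$ vertices has a Hamiltonian cycle precisely when $\max_i m_i\le \lfloor m/2\rfloor$. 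For each $4\le m\le n$, selecting any $m$ vertices gives an induced complete multipartite subgraph whose parts have size at most $2\le\lfloor m/2\rfloor$, hence it contains $C_m$; and for $m=3$, choosing one vertex from each of three distinct parts (possible since $k\ge 3$) produces a triangle. Thus $G$ has a cycle of every length from $3$ to $n$ and is pancyclic.

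I expect the main obstacle to be the structural dichotomy itself, namely that a connected paw-free graph is triangle-free or complete multipartite. Once this is granted, the remaining steps are routine: the degree argument disposing of the triangle-free case and the elementary selection argument for pancyclicity of balanced complete multipartite graphs are both straightforward. If one wished to avoid citing the paw-free characterization, the crux would be to reprove it directly, by fixing a triangle and analyzing how neighborhoods must interact so as to avoid both an induced $K_{1,3}$ and an induced $Z_1$.
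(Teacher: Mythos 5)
Your proof is correct, but it takes a genuinely different route from the paper, which in fact supplies no argument of its own for this statement: Theorem \ref{thm:z1} is quoted from Goodman and Hedetniemi \cite{GH}, with the remark that the proof of their Theorem 4 (a direct case analysis from 1974, predating Olariu's 1988 characterization of paw-free graphs) yields the result. Your route instead observes that $Z_1$ is the paw, invokes Olariu's theorem that every connected paw-free graph is triangle-free or complete multipartite, and then finishes each branch cleanly: in the triangle-free case, claw-freeness forces $\Delta(G)\le 2$, so the 2-connected graph $G$ is $C_n$; in the multipartite case, claw-freeness bounds every part by $2$, a triangle comes from three distinct parts, and for each $4\le m\le n$ any $m$ vertices induce a complete multipartite graph with parts of size at most $2\le\lfloor m/2\rfloor$, which is Hamiltonian by the standard criterion (this also follows from Chv\'atal--Erd\H{o}s, since for complete multipartite graphs $\kappa=m-\max_i m_i$ and $\alpha=\max_i m_i$), giving a $C_m$. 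All the cited ingredients are stated accurately and applied correctly. What your approach buys is brevity, transparency, and in fact a stronger theorem---your argument never uses the hypothesis $n\ge 10$, so it establishes the conclusion for every order---at the cost of resting almost entirely on Olariu's structural theorem, which, as you acknowledge, would be the real work in a self-contained proof; the Goodman--Hedetniemi argument is the self-contained (but longer and less structural) alternative.
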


Gould and Jacobson proved a similar result for $Z_2$ in \cite{GJ}.  

\begin{theorem}{\rm\cite{GJ}}
If $G$ is a 2-connected graph of order $n\ge 10$ that contains no induced subgraph isomorphic to $K_{1,3}$ or $Z_2$, then $G$ is either a cycle or $G$ is pancyclic. 
\label{thm:z2} 
\end{theorem}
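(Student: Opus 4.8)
The plan is to argue via a longest cycle together with a length-descent argument. Write $C = c_1 c_2 \cdots c_k c_1$ (indices read modulo $k$) for a longest cycle of $G$, and suppose $G$ is not itself a cycle. The first goal is to show $k = n$, i.e.\ that $C$ is Hamiltonian; the second is to show that from any cycle of length $m$ with $4 \le m \le n$ one can always extract a cycle of length $m-1$, and that a triangle is present. Together these produce cycles of every length $3, 4, \dots, n$, giving pancyclicity. The engine throughout is a local consequence of claw-freeness: if $u \notin C$ is adjacent to $c_i$, then $c_{i-1}, c_{i+1}, u \in N(c_i)$ cannot be independent, and were $u$ adjacent to $c_{i-1}$ or $c_{i+1}$ we could reroute $C$ through $u$ for a longer cycle, contradicting maximality. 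Hence $c_{i-1} c_{i+1} \in E(G)$, so $c_{i-1} c_i c_{i+1}$ is a triangle and $\{c_{i-1}, c_i, c_{i+1}, u\}$ induces a copy of $Z_1$. I would record this as a reusable claim: on a longest cycle, every off-cycle neighbour of $c_i$ forces the short chord $c_{i-1}c_{i+1}$ and creates a $Z_1$.

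For Hamiltonicity, suppose $k < n$. By 2-connectedness there is $u \notin C$ with a neighbour $c_i$ on $C$, and the claim above gives the triangle $c_{i-1}c_ic_{i+1}$ with $u \sim c_i$ and $u \not\sim c_{i-1}, c_{i+1}$. If $G$ were $Z_1$-free this would already finish the argument, but we have only $Z_2$-freeness, so I would extend the pendant: $u$ has a second neighbour $w$ (as $\deg(u) \ge 2$), and if $w \notin \{c_{i-1}, c_{i+1}\}$ with $w \not\sim c_{i-1}, c_i, c_{i+1}$, then $\{c_{i-1}, c_i, c_{i+1}, u, w\}$ induces $Z_2$, a contradiction. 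The remaining possibility is that every neighbour of $u$ other than $c_i$ is adjacent to, or equal to, one of $c_{i-1}, c_i, c_{i+1}$. Here I would use claw-freeness at $u$ and at $w$, together with the 2-connectivity of $G$, to locate two attachments of the component of $G - C$ containing $u$ to $C$ in near-consecutive positions, and thereby splice a detour through $u$ into $C$ to build a strictly longer cycle, again contradicting maximality. This forces $k = n$.

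For the descent, I claim the cycle $C' = d_1 d_2 \cdots d_m d_1$ under consideration at each stage, with $m \ge 4$, possesses a short chord $d_{j-1}d_{j+1}$. Given such a chord, bypassing $d_j$ yields the cycle $d_1\cdots d_{j-1}d_{j+1}\cdots d_m$ of length $m-1$, and the same chord together with $d_{j-1}d_j, d_jd_{j+1}$ simultaneously exhibits a triangle; hence one short chord delivers both the next smaller length and a $C_3$. To produce the short chord I would again invoke claw-freeness: if $C'$ had no short chord, then $\{d_{j-1}, d_{j+1}, x\}$ must be dependent for every neighbour $x$ of every $d_j$, which (as in the Hamiltonicity step) forces external neighbours to attach along $C'$ in patterns yielding either a short chord directly or a forbidden $Z_2$. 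Starting from the Hamiltonian cycle $C_n$ and iterating gives $C_{n-1}, C_{n-2}, \dots, C_4, C_3$.

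The main obstacle is the blocked case in the Hamiltonicity step, and its analogue in the descent: when a pendant vertex $u$ cannot be extended to a forbidden $Z_2$ because all of its further neighbours already attach to the triangle $c_{i-1}c_ic_{i+1}$. The $Z_2$ condition, being a longer-range constraint than $Z_1$, then gives no immediate contradiction, and one must instead analyse carefully how the off-cycle structure attaches to $C$ --- tracking second neighbours, applying claw-freeness repeatedly, and exploiting 2-connectivity to guarantee a second independent attachment --- in order to construct the longer cycle. The hypothesis $n \ge 10$ is precisely what keeps these local configurations from wrapping around a short cycle and invalidating the splicing, so the case counting must be carried out with $n \ge 10$ firmly in hand.
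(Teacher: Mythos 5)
First, a framing point: the paper does not prove this statement at all --- Theorem~\ref{thm:z2} is quoted from Gould and Jacobson \cite{GJ} --- so there is no in-paper proof to compare against, and your proposal must stand on its own. It does not, and the fatal problem is the descent step. You claim that the cycle $C'=d_1d_2\cdots d_m$ ($m\ge 4$) under consideration at each stage ``possesses a short chord $d_{j-1}d_{j+1}$,'' justified ``as in the Hamiltonicity step.'' But the engine of your Hamiltonicity step was the \emph{maximality} of $C$: an off-cycle neighbour $u$ of $c_i$ cannot be adjacent to $c_{i\pm 1}$, since that would splice $u$ into a longer cycle, and only then does claw-freeness force the chord $c_{i-1}c_{i+1}$. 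For an intermediate cycle in the descent this leverage is gone: a neighbour $x$ of $d_j$ may perfectly well be adjacent to $d_{j+1}$ (that only creates a \emph{longer} cycle, which is harmless), the claw at $d_j$ is killed, and neither a short chord nor an induced $Z_2$ results. Worse, the claim is false as a local statement: $K_3\,\square\,K_3$ --- the paper's own Figure~\ref{fig:maxdeg4} --- is 2-connected, claw-free, $Z_2$-free, and not a cycle, yet every one of its $4$-cycles is chordless. Hence no argument using only claw-freeness and $Z_2$-freeness in the vicinity of a given cycle can ever produce your short chord; the hypothesis $n\ge 10$ would have to enter globally, and you never say how. A further red flag: a short chord is in particular a chord, so your descent, if it worked, would immediately prove the much stronger \emph{chorded} pancyclicity --- but that is exactly Theorem~\ref{thm:Z2} of this paper, whose proof requires several pages of case analysis in which $K_3\,\square\,K_3$ arises as the terminal obstruction. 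A correct descent must either choose the $m$-cycle globally or locate the $(m-1)$-cycle elsewhere in the graph, not inside an arbitrary given cycle.

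The Hamiltonicity half is also incomplete, as you partly concede. Your reusable claim (off-cycle neighbours of $c_i$ force $c_{i-1}c_{i+1}\in E(G)$ and a $Z_1$) and the resulting restriction that every neighbour $w\ne c_i$ of $u$ attaches to $\{c_{i-1},c_i,c_{i+1}\}$ are both correct, but the remaining ``blocked case'' --- which you dispatch with ``locate two attachments \dots and thereby splice a detour through $u$'' --- is the actual content of the theorem. For instance, $w$ may lie off $C$ with $w\sim c_i$ and $w\sim u$ (a pendant triangle at $c_i$, which creates no longer cycle), or $w$ may be a distant cycle vertex $c_j$, in which case one must rule out $c_ic_{j\pm 1}$ by a weaving argument using both short chords, conclude $c_ic_j\in E(G)$, and then branch again; nothing in your outline shows these branches terminate in a longer cycle or a forbidden subgraph. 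In short, both halves of the proposal defer to the phrase ``analyse carefully'' precisely where the real work of \cite{GJ} lies, and in the descent the intended local analysis cannot succeed even in principle.
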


Faudree, Gould, Ryjacek, and Schiermeyer proved a similar result for certain paths in \cite{FRS}.

\begin{theorem} {\rm\cite{FRS}}
If $G$ is a 2-connected graph of order $n\ge 6$ that is $\{K_{1,3}, P_5\}$-free, then $G$ is either a cycle or $G$ is pancyclic. 
\label{thm:p5} 
\end{theorem}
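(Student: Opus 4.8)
The plan is to prove the dichotomy directly: assuming $G$ is not a cycle, I would produce a cycle of every length from $3$ to $n$. First I would settle the easy branch. If $\Delta(G)\le 2$ then, since $G$ is $2$-connected, $G=C_n$ and there is nothing to prove; so I may assume some vertex $v$ has $\deg(v)\ge 3$. Claw-freeness then forbids an independent set of size $3$ inside $N(v)$, so two neighbors of $v$ are adjacent and $G$ contains a triangle, giving the cycle of length $3$. I would also record the one structural consequence of $P_5$-freeness that drives everything: a connected $P_5$-free graph has diameter at most $3$, since a geodesic on five vertices would be an induced $P_5$.

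Second, I would show $G$ is Hamiltonian. Let $C$ be a longest cycle and suppose some vertex lies off $C$. The insertion lemma for claw-free graphs says that if $c\in C$ has a neighbor $h\notin C$, then the two neighbors $c^-,c^+$ of $c$ on $C$ must be adjacent: otherwise $\{c^-,c^+,h\}$ is an independent triple in $N(c)$ (a claw), whereas an edge $hc^-$ or $hc^+$ would let us splice $h$ into $C$ and obtain a longer cycle. Using $2$-connectivity there are at least two vertices of $C$ joined to a common off-cycle component, and the diameter bound keeps every off-cycle vertex within distance $3$ of $C$. I would then run a short case analysis on this attachment pattern: in each configuration either a cycle strictly longer than $C$ appears (contradicting maximality), or the off-cycle vertex, its short route back to $C$, and the non-adjacencies forced by maximality assemble into an induced $P_5$ (contradicting the hypothesis). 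Hence $C$ spans $G$ and $G$ has a Hamilton cycle $C=v_1v_2\cdots v_n$.

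Third, I would descend through the cycle lengths. The engine is that every cycle of $G$ of length at least $6$ has a chord: a chordless cycle $C_k$ with $k\ge 6$ contains five consecutive vertices inducing a $P_5$. Given a cycle $C_k$ with $6\le k\le n$, pick a chord spanning the fewest cycle-edges. If its short arc has length $2$ we have a chord $v_iv_{i+2}$, and routing $C_k$ through this chord past $v_{i+1}$ yields a cycle of length $k-1$. If instead the minimal span is larger, then by minimality the short arc carries no further chords, so the arc together with the chord is an \emph{induced} cycle; since $G$ is $P_5$-free this induced cycle has length at most $5$ (an induced $C_\ell$ with $\ell\ge 6$ contains an induced $P_5$), so the minimal span is $3$ or $4$ and we obtain an induced $C_4$ or $C_5$. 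I would analyze these configurations together with their neighbors on $C_k$, using claw-freeness at the chord endpoints to extract a chord of span $2$, which drives the reduction from $C_k$ to $C_{k-1}$. Iterating this reduction, together with the triangle already found, yields a cycle of every length and hence pancyclicity.

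The hard part will be this last descent, and specifically the induced $C_4$ and $C_5$ configurations: there the minimal chord only exhibits a cycle of length $k-2$ or $k-3$, so producing the length-$(k-1)$ cycle requires extracting a span-$2$ chord from the local claw-free structure, and one must verify that such configurations cannot persist around the entire cycle. I expect the bulk of the work, and the case analysis, to live here, alongside a direct verification of the few small orders near $n=6$ and of the shortest cycle lengths. It is worth noting that the usual complete-bipartite obstruction to pancyclicity does not arise: $K_{n/2,n/2}$ contains an induced $K_{1,3}$ and is therefore excluded by claw-freeness, so once all shorter cycles are produced the only graph left outside the pancyclic conclusion is $G=C_n$, as the statement asserts.
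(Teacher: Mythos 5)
The paper never proves this statement --- Theorem~\ref{thm:p5} is quoted from \cite{FRS} and used as a black box (e.g.\ in the proof of Theorem~\ref{thm:p5ch}) --- so your attempt must stand on its own, and as written it is an outline with the load-bearing steps missing. The sound parts are the easy ones: the $\Delta(G)\le 2$ branch, the triangle from a degree-$3$ vertex via claw-freeness, and the observations that chordless or induced cycles of length at least $6$ contain an induced $P_5$. But both central claims are deferred to ``a short case analysis'' that you never carry out: (i) that a longest cycle is spanning (your insertion lemma is correct, but the argument that the attachment patterns force either a longer cycle or an induced $P_5$ is precisely the theorem-level work, and nothing in your text pins it down); and (ii) the descent from a $k$-cycle to a $(k-1)$-cycle.

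Worse, in the one case where the descent is needed, the mechanism you name is self-contradictory. If the minimum chord span on $C_k$ is $3$ (say chord $v_iv_{i+3}$), then \emph{by definition of minimality} no span-$2$ chord exists anywhere on $C_k$, so ``extracting a chord of span $2$'' from this cycle is impossible. Indeed, claw-freeness at $v_i$ (leaves $v_{i-1},v_{i+1},v_{i+3}$) cannot produce $v_{i-1}v_{i+1}$ or $v_{i+1}v_{i+3}$ --- both are span-$2$ chords, hence excluded --- so it forces the edge $v_{i-1}v_{i+3}$, and symmetrically $v_iv_{i+4}$, neither of which yields a $(k-1)$-cycle; one is then launched into a genuinely hard analysis (in which $P_5$-freeness, so far unused locally, must enter) or must abandon this cycle for a different one. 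This is why \cite{FRS} argue in the opposite direction, via cycle \emph{extendability} (growing each cycle by one vertex using vertices off the cycle), rather than descending through chords. Finally, even granting the descent, it only reaches length $5$ (it needs $k\ge 6$ to guarantee a chord), and $P_5$-freeness permits induced $C_5$'s, so the existence of a $4$-cycle needs a separate argument that you only gesture at. Until (i), (ii), and the $C_4$ step are written out, this is a plausible strategy, not a proof.
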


\begin{theorem} {\rm\cite{FRS}}
If $G$ is a 2-connected graph of order $n\ge 10$ that is $\{K_{1,3}, P_6\}$-free, then $G$ is either a cycle or $G$ is pancyclic. 
\label{thm:p6} 
\end{theorem}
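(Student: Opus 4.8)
The plan is to assume $G$ is not a cycle --- equivalently, that $G$ has a vertex of degree at least $3$ --- and to show that $G$ is then pancyclic, i.e.\ that it contains a cycle of every length from $3$ to $n$. Throughout I would exploit the two local consequences of the hypotheses: since $G$ is claw-free, the neighborhood of every vertex has independence number at most $2$, so among any three neighbors of a vertex two must be adjacent; and since $G$ is $P_6$-free, no induced path has six vertices, which sharply limits how far apart attachment points and chords can be spread. A convenient first reduction is to split on whether $G$ contains an induced $P_5$. If it does not, then $G$ is $\{K_{1,3},P_5\}$-free and, as $n\ge 10\ge 6$, Theorem \ref{thm:p5} already gives that $G$ is a cycle or pancyclic. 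So the substantive case is that $G$ contains an induced path $P=x_1x_2x_3x_4x_5$ while remaining $P_6$-free.

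In this case I would first extract strong structural information from $P$. Because extending $P$ to an induced $P_6$ is forbidden, every neighbor $y$ of the endpoint $x_1$ lying outside $P$ must be adjacent to at least one of $x_2,\dots,x_5$ (otherwise $y x_1 x_2 x_3 x_4 x_5$ would be an induced $P_6$), and symmetrically for $x_5$; iterating this observation together with claw-freeness at the path vertices forces the vertices of $G$ to cluster tightly around $P$, giving a graph of small diameter carrying an abundance of short chords. The goal of this step is to reach a configuration dense enough that a Hamiltonian cycle can be built and, more importantly, that it carries chords of the lengths needed below.

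With Hamiltonicity in hand (obtained either from this structural description or by a separate longest-cycle argument: take a longest cycle $C$, and at any attachment vertex $c_i$ of an off-cycle component use claw-freeness to force either the chord $c_{i-1}c_{i+1}$ or adjacency of the off-cycle neighbor to $c_{i-1}$ or $c_{i+1}$, the latter immediately lengthening $C$ against maximality), the final phase is a descending argument: from a cycle of length $m$ I would produce one of length $m-1$ by locating a short chord $c_{i-1}c_{i+1}$, whose existence is guaranteed near any vertex of degree at least $3$ by claw-freeness, and using it to bypass $c_i$. The main obstacle is precisely the bookkeeping of this descent: one must ensure that at every length $m$ down to $3$ a usable short chord survives, and the dangerous configurations are exactly those in which the only available chords are long. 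Ruling these out is where the induced $P_5$ analysis and $P_6$-freeness must be combined most carefully, and I expect this case analysis --- rather than any single clever trick --- to be the heart of the proof.
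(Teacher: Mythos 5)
First, note that the paper does not prove this statement at all: Theorem \ref{thm:p6} is quoted from \cite{FRS}, so there is no internal proof to compare your attempt against; the proposal has to stand on its own. Your opening reduction is sound and even elegant: if $G$ has no induced $P_5$, then Theorem \ref{thm:p5} applies since $n\ge 10\ge 6$, so one may assume $G$ contains an induced $P_5$ but no induced $P_6$. Beyond this reduction, however, the proposal is a plan rather than a proof, and its one concrete mechanism is incorrect. You claim that near any vertex of degree at least $3$ on an $m$-cycle $C=c_1c_2\cdots c_mc_1$, claw-freeness guarantees the short chord $c_{i-1}c_{i+1}$, which would let you bypass $c_i$ and descend from length $m$ to $m-1$. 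It does not: if the third neighbor of $c_i$ is a cycle vertex $c_j$, the potential claw $\{c_i,c_j,c_{i-1},c_{i+1}\}$ is destroyed by any one of the edges $c_jc_{i-1}$, $c_jc_{i+1}$, or $c_{i-1}c_{i+1}$, and the first two are long chords that do not produce an $(m-1)$-cycle: a pair of chords from $c_j$ to the consecutive vertices $c_i,c_{i+1}$ yields cycles of lengths $j-i$, $j-i+1$, $m-(j-i)+1$, and $m-(j-i)+2$, none of which need equal $m-1$ when $3<j-i<m-2$. So the descent step, which you yourself identify as the heart of the argument, has no valid justification; ruling out the ``only long chords'' configurations is exactly where $P_6$-freeness would have to enter, and the proposal never actually uses it there.

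The same incompleteness affects the Hamiltonicity step. The longest-cycle sketch (claw-freeness at an attachment vertex forces $c_{i-1}c_{i+1}$, since an off-cycle neighbor adjacent to $c_{i-1}$ or $c_{i+1}$ would lengthen $C$) is correct as far as it goes, but it only shows that every attachment vertex carries a short chord; it does not yield a contradiction with maximality, and indeed 2-connected claw-free graphs need not be Hamiltonian, so $P_6$-freeness is indispensable here too and is never invoked. Likewise, the ``clustering around an induced $P_5$'' analysis is stated as a goal (reach a configuration ``dense enough that a Hamiltonian cycle can be built'') with no actual argument supplied. In short: the reduction to the case of an induced $P_5$ via Theorem \ref{thm:p5} is a genuine and correct idea, but everything after it is either missing or, in the case of the short-chord claim, false as stated.
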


Theorem \ref{thm:p5} implies the following result for $P_4$.   

\begin{theorem} {\rm\cite{FRS}}
If $G$ is a 2-connected graph of order $n\ge 6$ that is $\{K_{1,3}, P_4\}$-free, then $G$ is either a cycle or $G$ is pancyclic. 
\label{thm:p4} 
\end{theorem}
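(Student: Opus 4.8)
The plan is to deduce this directly from Theorem \ref{thm:p5}, since forbidding an induced $P_4$ automatically forbids an induced $P_5$. The only thing to establish is the containment of induced subgraphs, after which the conclusion is immediate.

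First I would verify the key observation via its contrapositive. Suppose $G$ contains an induced subgraph isomorphic to $P_5$, say on vertices $v_1,v_2,v_3,v_4,v_5$ with edges exactly $v_1v_2,\,v_2v_3,\,v_3v_4,\,v_4v_5$. Then consider the four vertices $v_1,v_2,v_3,v_4$. Because the $P_5$ is induced, the pairs $v_1v_3$, $v_1v_4$, and $v_2v_4$ are all non-adjacent, so the only edges present among these four vertices are $v_1v_2,\,v_2v_3,\,v_3v_4$. Hence $G[\{v_1,v_2,v_3,v_4\}]\cong P_4$. This shows that any graph containing no induced $P_4$ contains no induced $P_5$; that is, every $P_4$-free graph is also $P_5$-free.

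With this in hand, the argument concludes in one step. Since $G$ is $\{K_{1,3},P_4\}$-free, it is in particular $\{K_{1,3},P_5\}$-free, and it still satisfies the remaining hypotheses of Theorem \ref{thm:p5}, namely that $G$ is $2$-connected of order $n\ge 6$. Applying Theorem \ref{thm:p5} then yields at once that $G$ is either a cycle or pancyclic, as desired.

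There is essentially no obstacle in this proof; the single point meriting care is the direction of the induced-subgraph implication. One must confirm that an induced $P_5$ genuinely contains an \emph{induced} $P_4$ (not merely a path on four vertices that might carry extra chords in $G$), since the freeness hypothesis of Theorem \ref{thm:p4} concerns induced copies. The verification above handles this precisely by noting that the relevant non-adjacencies are inherited from the inducedness of the $P_5$.
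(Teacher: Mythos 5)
Your proof is correct and takes essentially the same route as the paper, which derives Theorem~\ref{thm:p4} directly from Theorem~\ref{thm:p5} on the grounds that forbidding an induced $P_4$ also forbids an induced $P_5$. Your explicit check that the first four vertices of an induced $P_5$ form an induced $P_4$ is precisely the (unstated) observation underlying the paper's one-line deduction.
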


In this paper, we will extend each of these theorems to analogous results on chorded pancyclicity.   

\section{Results}

\begin{defn} 
A graph $G$ of order $n$ is called \emph{chorded pancyclic} if it contains a chorded cycle of every length $m$, $4\le m\le n$.   
\end{defn}

Note first that not all pancyclic graphs are chorded pancyclic.  
The graph in Figure \ref{fig:NonPan} is pancyclic, but contains no chorded 5-cycle. 
Further, the graph in Figure \ref{fig:inffamily} represents an infinite family of pancyclic graphs that do not contain a chorded 4-cycle.   

\begin{figure}[h]
\centering
\begin{minipage}{.4\textwidth}
\centering
\includegraphics[width=3cm]{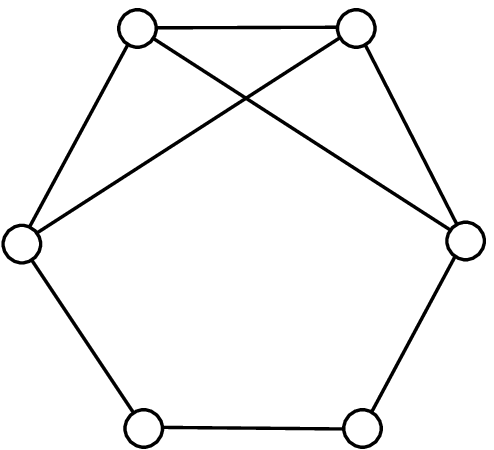}
\caption{ A pancyclic graph with no chorded 5-cycle.}
\label{fig:NonPan}
\end{minipage}
\begin{minipage}{.1\textwidth}
\hspace{1cm}
\end{minipage}
\begin{minipage}{.4\textwidth}
\centering
\vspace{.4cm}
\includegraphics[width=4.1cm]{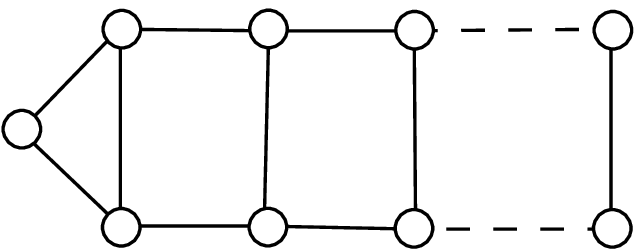}
\vspace{.75cm}
\caption{An infinite family of pancyclic graphs with no chorded $C_4$.}
\label{fig:inffamily}
\end{minipage}
\end{figure}

An important tool used in the proofs of this paper is a $k$-tab, which we define here.
\begin{defn}
Given a subgraph $H$ of $G$, a \emph{$k$-tab on $H$} is a path $a_0a_1\cdots a_{k+1}$ with $k$ internal vertices so that 
$\{a_1,\ldots,a_k\}\subseteq V(G)-V(H)$ and
there exist distinct vertices $u,v\in V(H)$ where $a_0=u$ and $a_{k+1}=v$.
\end{defn}
Note that, because there are no cut-vertices in a 2-connected graph, every proper subgraph must have a $k$-tab for some $k>0$.
Another tool we use in proofs, to better highlight when a subgraph is induced in $G$, is a \emph{frozen set} $F$.  
Depending upon particular cases, the frozen set may be modified during the course of a proof.
If a vertex is in the frozen set, it has no neighbors other than those already given.  
In particular, if a subgraph contains only frozen vertices, then it must be induced.  
Also, if every vertex of $G$ is in $F$, then we have completely described the graph $G$.  
In figures, frozen vertices will be indicated by a solid vertex.

We now turn our attention to extending the results for forbidden subgraphs in Theorem \ref{thm:pan}.
The following simple lemma has appeared many times in the literature.
\begin{lem} Let $G$ be claw-free.  For any $x\in V(G)$, $N_G(x)$ is either connected and traceable, or two disjoint cliques.  
\label{lem:clique}
\end{lem}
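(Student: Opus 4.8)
The plan is to translate the claw-free hypothesis into a bound on the independence number of the neighborhood and then treat the connected and disconnected cases separately. Write $N = N_G(x)$ and $H = G[N]$. Since $G$ is claw-free, $x$ cannot be the center of an induced $K_{1,3}$, so $x$ has no three pairwise nonadjacent neighbors; equivalently, $H$ contains no independent set of size three, i.e.\ $\alpha(H) \le 2$. This single observation drives both cases.

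First I would dispose of the case where $H$ is disconnected. If $H$ had three or more components, choosing one vertex from each of three distinct components would produce three pairwise nonadjacent vertices, contradicting $\alpha(H) \le 2$; hence $H$ has exactly two components. Next, if some component contained two nonadjacent vertices $a, b$, then together with any vertex $c$ from the other component the set $\{a, b, c\}$ would be independent (as $c$ lies in a different component from both $a$ and $b$), again contradicting $\alpha(H) \le 2$. Therefore each component is complete, and $H$ is the disjoint union of two cliques, as required.

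It remains to show that if $H$ is connected then it is traceable, and this is the step I expect to require the most care. I would argue via a longest path $P = v_1 v_2 \cdots v_k$ in $H$. By maximality, every neighbor of each endpoint $v_1, v_k$ lies on $P$. Suppose $P$ is not Hamiltonian; since $H$ is connected, some vertex $w \notin V(P)$ is adjacent to a vertex of $P$, and in particular $w$ is adjacent to neither $v_1$ nor $v_k$. Then $\{w, v_1, v_k\}$ cannot be independent, forcing $v_1 v_k \in E(H)$, so $P$ closes to a cycle $C = v_1 v_2 \cdots v_k v_1$. Choosing $w$ adjacent to some interior vertex $v_i$ of $C$ and rerouting along the cycle yields the path $w\, v_i v_{i+1} \cdots v_k v_1 v_2 \cdots v_{i-1}$ on $k+1$ vertices, contradicting the maximality of $P$. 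Hence $P$ is Hamiltonian and $H$ is traceable.

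The main obstacle is this connected case: one must verify that the longest path genuinely closes into a spanning cycle of its own vertex set and that an external neighbor then produces a strictly longer path. Alternatively, the connected case follows at once from the Chv\'atal--Erd\H{o}s theorem, since a connected graph satisfies $\kappa(H) \ge 1 \ge \alpha(H) - 1$ whenever $\alpha(H) \le 2$, so $H$ is traceable; I would present the direct longest-path argument above in order to keep the proof self-contained.
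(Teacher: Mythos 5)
Your proof is correct, but there is nothing in the paper to compare it against: the authors do not prove Lemma~\ref{lem:clique} at all, introducing it only with the remark that it ``has appeared many times in the literature.'' Your argument therefore supplies a proof the paper omits, and it is sound. The reduction of claw-freeness at $x$ to $\alpha(G[N_G(x)])\le 2$ is the right first move; the disconnected case (exactly two components, each complete) follows immediately from it exactly as you say. In the connected case, your longest-path argument is complete: since $P$ is not spanning and $H$ is connected, a vertex $w\notin V(P)$ with a neighbor on $P$ exists, maximality forces $wv_1,wv_k\notin E(H)$, the independence bound then forces $v_1v_k\in E(H)$ (note $v_1\neq v_k$ since a connected non-spanned $H$ has at least two vertices, so the longest path has at least two vertices), and rotating around the resulting cycle through the interior vertex $v_i$ adjacent to $w$ produces a path on $k+1$ vertices, a contradiction. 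Your closing observation that the connected case is an instance of the Chv\'atal--Erd\H{o}s theorem for Hamiltonian paths ($\kappa\ge\alpha-1$) is also accurate and is probably the ``literature'' proof the authors had in mind; the self-contained rotation argument is the better choice here since it keeps the lemma elementary and independent of outside machinery.
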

Using Lemma \ref{lem:clique} we prove the following useful lemma.
\begin{lem} Let $G$ be a $K_{1,3}$-free graph.  For any $x\in V(G)$ and any integer $k\geq3$, if $\deg_G(x)\ge 2k-1$ then there is a chorded $(k+1)$-cycle in $G$. 
\label{lem:k+1cc}
\end{lem}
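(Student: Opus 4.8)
The plan is to apply Lemma~\ref{lem:clique} to the high-degree vertex $x$ and split into the two cases it provides: either $N_G(x)$ induces two disjoint cliques, or it induces a connected, traceable graph. In both cases the goal is to produce a $(k+1)$-cycle together with a chord, and the key observation is that since $x$ is adjacent to every vertex of $N_G(x)$, the vertex $x$ itself will supply both the cycle structure and the chord. The hypothesis $\deg_G(x)\ge 2k-1$ is what guarantees enough neighbors to carry this out.

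First, suppose $N_G(x)$ is the disjoint union of two cliques $A$ and $B$. Since $|A|+|B|=\deg_G(x)\ge 2k-1$, the pigeonhole principle forces one of them, say $A$, to satisfy $|A|\ge k$. Then $A\cup\{x\}$ induces a complete graph on at least $k+1$ vertices, because $x$ is adjacent to all of $A$ and $A$ is a clique. Any $k+1$ of these vertices form a $K_{k+1}$, and as $k\ge 3$ we have $k+1\ge 4$, so its Hamiltonian cycle is a $(k+1)$-cycle possessing (many) chords. This settles the first case.

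Second, suppose $N_G(x)$ induces a connected traceable graph, and let $v_1v_2\cdots v_m$ be a Hamiltonian path of $G[N_G(x)]$, where $m=\deg_G(x)\ge 2k-1\ge k$. I would take the initial segment $v_1v_2\cdots v_k$ and close it through $x$ to form the cycle $C=xv_1v_2\cdots v_kx$, which has exactly $k+1$ vertices. Since $x$ is adjacent to every $v_i$ while its only two neighbors on $C$ are $v_1$ and $v_k$, the edge $xv_2$ is a chord of $C$; here $k\ge 3$ guarantees $v_2\notin\{v_1,v_k\}$, so $xv_2$ is genuinely a chord and not a cycle edge. Hence $C$ is the desired chorded $(k+1)$-cycle.

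The argument is short, and the only place requiring care is the bookkeeping that makes the bound $2k-1$ natural: it is exactly what is needed to force a clique of order $k$ in the two-clique case, whereas the traceable case needs only $k$ neighbors. The hypothesis $k\ge 3$ is equally essential, since a $(k+1)$-cycle can admit a chord only when $k+1\ge 4$. I do not anticipate a genuine obstacle; the main subtlety is merely verifying that the chord $xv_2$ is distinct from the cycle edges, which is where the distinctness of $v_1,v_2,v_k$ enters.
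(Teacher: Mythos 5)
Your proposal is correct and follows essentially the same argument as the paper: split via Lemma~\ref{lem:clique} into the traceable case (close an initial segment $v_1\cdots v_k$ of the Hamiltonian path through $x$, with chord $xv_2$) and the two-clique case (pigeonhole gives a clique of size $\ge k$, yielding a chorded $(k+1)$-cycle through $x$). The only cosmetic differences are the order of the cases and that in the clique case you invoke the completeness of $A\cup\{x\}$ rather than naming the specific chord $xv_2$, which the paper does.
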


\begin{proof}
Consider a vertex $x\in V(G)$ such that $\deg_G(x)=m\ge2k-1$.  
Lemma \ref{lem:clique} implies that $N_G(x)$ is either connected and traceable, or two disjoint cliques.
\vspace{.1in}

\noindent{\it Case 1:} Suppose that $N_G(x)$ is connected and traceable.

Let $v_1 v_2 \cdots v_k v_{k+1}\cdots v_m$ be a hamiltonian path in $N_G(x)$. 
Then $x v_1 v_2 \cdots v_k x$ is a $(k+1)$-cycle in $G$ with chord $xv_2$ (in fact, there are $k-2$ chords in this $(k+1)$-cycle).
\vspace{.1in}

\noindent{\it Case 2:} Suppose that $N_G(x)$ is two disjoint cliques.

Partition the $m$ vertices into two cliques.
Then at least one clique has at least $k$ vertices.
If the vertices  $\{v_1, v_2, \ldots, v_k\}$ form a clique in $N_G(x)$, then $x v_1 v_2\cdots v_k x$ is a $(k+1)$-cycle in $G$ with chord $xv_2$. 
Thus, the lemma is proven.
\end{proof}

\begin{theorem}\label{thm:Z2}
Let $G$ be a 2-connected graph of order $n\ge 10$.  If $G$ is $\{K_{1,3}, Z_2\}$-free, then 
$G= C_n$ or $G$ is chorded pancyclic. 
\end{theorem}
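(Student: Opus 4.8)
The plan is to use the known pancyclicity theorem to reduce to a residual problem about producing chords, and then to manufacture a chorded cycle of each length using the two forbidden subgraphs. First I would invoke Theorem \ref{thm:z2}: since $G$ is $\{K_{1,3},Z_2\}$-free, $2$-connected, and of order $n\ge 10$, it is either $C_n$ or pancyclic. If $G=C_n$ we are done, so assume $G$ is pancyclic and $G\neq C_n$. For the length $m=n$, pancyclicity gives a Hamiltonian cycle, and since $G\neq C_n$ there is at least one additional edge, which is a chord of that cycle; hence a chorded $n$-cycle exists. For each remaining length $m$ with $4\le m\le n-1$, pancyclicity provides an $m$-cycle $C$; if $C$ has a chord we are finished for that $m$, so the only case to treat is that $C=u_1u_2\cdots u_m u_1$ is chordless (induced).

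The heart of the argument is the range $5\le m\le n-1$. Because $m<n$, $C$ is a proper subgraph, so (as $G$ is connected) some vertex $w\notin V(C)$ is adjacent to a vertex of $C$; relabel so that $w\sim u_1$. Applying claw-freeness at $u_1$, whose neighborhood contains the nonadjacent pair $u_2,u_m$, forces $w$ to be adjacent to $u_2$ or to $u_m$; relabel so that $w\sim u_2$, creating the triangle $wu_1u_2$. Now I would exploit $Z_2$-freeness along $C$. If $w\sim u_{m-1}$, then $wu_1u_2\cdots u_{m-1}w$ is an $m$-cycle with chord $wu_2$, and we are done. Otherwise $w\not\sim u_{m-1}$, and the set $\{w,u_1,u_2,u_m,u_{m-1}\}$ would induce a $Z_2$ (triangle $wu_1u_2$ with pendant path $u_1u_mu_{m-1}$) unless $w\sim u_m$; since $G$ is $Z_2$-free this forces $w\sim u_m$. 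Repeating the same device on the triangle $wu_1u_m$ with pendant path $u_mu_{m-1}u_{m-2}$ forces $w\sim u_{m-2}$. Then $wu_mu_1u_2\cdots u_{m-2}w$ is an $m$-cycle with chord $wu_1$, completing this range. Throughout, the nonadjacencies needed to apply $Z_2$-freeness come from $C$ being chordless together with $m\ge 5$ (so that $u_m,u_{m-1},u_{m-2}$ are distinct from, and nonadjacent to, the triangle vertices); the frozen-set bookkeeping is exactly what records these nonadjacencies.

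This leaves the single length $m=4$, which I expect to be the main obstacle, because the pendant-path argument degenerates: when $m=4$ the pendant vertex $u_{m-1}=u_3$ is adjacent to the triangle vertex $u_2$, so the configuration is no longer an induced $Z_2$ and nothing is forced. For $m=4$ I would instead look directly for a diamond ($K_4$ minus an edge, i.e.\ two triangles sharing an edge), since such a subgraph is precisely a chorded $4$-cycle. Starting from the triangle $wu_1u_2$ obtained as above, I would analyze $N_G(w)$ via Lemma \ref{lem:clique}: in the traceable case a Hamiltonian path of $N_G(w)$ through $u_1,u_2$ yields a second triangle sharing the edge $wu_1$ or $wu_2$, and in the two-clique case the clique containing the adjacent pair $u_1,u_2$ supplies the needed common neighbor; vertices of large degree are dispatched immediately by Lemma \ref{lem:k+1cc}, since degree at least $5$ already forces a chorded $4$-cycle. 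The delicate point, and where I expect the real work to lie, is organizing these subcases so that in every configuration the detour through $w$ lands on a cycle of length \emph{exactly} $m$ while still carrying a chord; this exact-length bookkeeping, rather than the existence of chords per se, is the crux of the remaining analysis.
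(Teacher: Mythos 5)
Your reduction via Theorem \ref{thm:z2} and your handling of the lengths $m=n$ and $5\le m\le n-1$ are correct, and for $m\ge 5$ your argument is essentially the paper's: claw-freeness at $u_1$ forces the triangle $wu_1u_2$, and $Z_2$-freeness then propagates edges from $w$ around the cycle until a detour of length exactly $m$ with a chord appears (the paper runs the $Z_2$'s forward along $v_3,v_4,v_5$ where you run them backward through $u_m,u_{m-1},u_{m-2}$; both work, and your index checks are sound for all $m\ge5$).

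The genuine gap is in the case $m=4$, and it is not merely a matter of ``organizing subcases.'' Your plan is to find a diamond locally, by analyzing $N_G(w)$ for a vertex $w$ in a triangle $wu_1u_2$ via Lemma \ref{lem:clique} and dispatching high degrees via Lemma \ref{lem:k+1cc}. The traceable case and the case of a clique of size at least $3$ do produce a diamond, but when $N_G(w)$ is two disjoint cliques each of size at most $2$ (in particular, when the clique containing the adjacent pair $u_1,u_2$ is exactly $\{u_1,u_2\}$), nothing is forced, and no purely local argument can close this case: the graph $K_3\,\square\,K_3$ (the paper's Figure \ref{fig:maxdeg4}) is $2$-connected, claw-free, $Z_2$-free, pancyclic, of order $9$, every vertex neighborhood in it is two disjoint copies of $K_2$, and it contains no chorded $4$-cycle. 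So any correct proof of the $m=4$ case must use the hypothesis $n\ge 10$ in an essential, global way, which your sketch never does. This is precisely where the bulk of the paper's proof lies: it chooses $m$ to be the \emph{largest} chordless length, so that when $m=4$ a chorded $5$-cycle $D$ is available; it then concludes $\Delta(G)\le 4$ from Lemma \ref{lem:k+1cc}, and grows $G$ outward from $D$ using claw- and $Z_2$-freeness together with the frozen-set bookkeeping, showing that a graph with no chorded $4$-cycle is forced to be a completely determined graph on $7$ vertices or $K_3\,\square\,K_3$ on $9$ vertices, both contradicting $n\ge 10$. (A secondary consequence of your setup: since you treat each $m$ independently rather than taking $m$ largest, you do not even have the chorded $5$-cycle needed to launch such a structural argument; that is easy to restore, but the global analysis itself cannot be avoided.)
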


\begin{proof}
Suppose that $G$ is a 2-connected graph of order $n\ge10$ that is $\{K_{1,3},Z_2\}$-free and that $G$ is not $C_n$.
By Theorem \ref{thm:z2}, we know $G$ must be pancyclic.  For the sake of contradiction, suppose that $G$ is not chorded pancyclic.
Let $m$ be the largest value with $4\le m<n$ such that every $m$-cycle in $G$ does not contain a chord.
First we show that $m$ must be 4, and then we show that there is a chorded 4-cycle in $G$.

Suppose that $m\geq5$ and consider an $m$-cycle $C=v_1v_2v_3\cdots v_m v_1$ in $G$.  
Since $G$ is 2-connected and $m<n$, there exists a vertex $x\in V(G)-V(C)$ such that $xv\in E(G)$ for some $v\in V(C)$.  
Without loss of generality, we may assume that $xv_1\in E(G)$.  
Then $\{v_1,x,v_2,v_m\}$ induces a claw in $G$ unless $xv_2$, $xv_m$ or $v_2v_m$ is an edge.
If $v_2v_m$ is added then $C$ is a chorded $m$-cycle and we are done.  
By symmetry, adding either $xv_2$ or $xv_m$ as an edge is equivalent so, without loss of generality, we assume that $xv_2\in E(G)$.  
Now $\{x,v_1,v_2,v_3,v_4\}$ induces a $Z_2$ in $G$.  
The only two edges that can eliminate this induced $Z_2$ without adding a chord to the $m$-cycle $C$ are $xv_3$ and $xv_4$.

If $xv_4\in E(G)$ then $v_1v_2 xv_4v_5 \cdots v_m v_1$ is an $m$-cycle with chord $v_1x$.  
Thus we may assume that $xv_3\in E(G)$.
Now $\{x,v_2,v_3,v_4,v_5\}$ induces $Z_2$ in $G$.   
The only edges that will eliminate this induced $Z_2$, but will not add a chord to $C$ are $xv_4$ and $xv_5$.  
If $xv_4\in E(G)$ then $v_1 v_2 x v_4 v_5 \cdots v_m v_1$ is an $m$-cycle with chord $xv_1$.  
If instead $xv_5\in E(G)$, then $v_1 v_2 v_3 x v_5 \cdots v_m v_1$ is an $m$-cycle with chord $xv_1$.  

Therefore, it follows that $m=4$.
By Lemma \ref{lem:k+1cc}, it follows that $\Delta(G)\leq 4$.
Let $D=vwxyzv$ be a 5-cycle in $G$ with chord $vx$.

Suppose that both $v$ and $x$ can be added to $F$; that is, suppose that $v$ and $x$ have degree 3 in $G$.
Because $G$ is 2-connected and $n>5$, $D$ has a $k$-tab for some $k$ with two distinct endpoints in $\{w,z,y\}$.
By symmetry, we may assume that $y$ is an endpoint of the $k$-tab and thus $ay$ is an edge for $a\in V(G)-V(D)$.
But now $\{y,a,x,z\}$ induces a claw, unless $za$ is an edge (recall that $x$ is frozen).
Further, $\{w,v,x,y,a\}$ induces a $Z_2$ unless one of $wy$ or $wa$ is an edge.
Because the edge $wy$ gives a 4-cycle $wyxvw$ in $G$ with chord $wx$, it follows that $za$ and $wa$ are both edges.
But now $G[V(D)\cup\{a\}]$ is isomorphic to $K_3\,\square\, K_2$, which is vertex-transitive.
Therefore, all vertices of $G$ can be added to $F$, as vertices can be relabeled so that any edge from $V(D)\cup\{a\}$ to $V(G)-(V(D)\cup\{a\})$ is a new edge using the vertex $x$.
As a side note, this symmetry argument is used many times in further proofs to add vertices to $F$.

Thus it follows that, without loss of generality, $ax$ is an edge for $a\in V(G)-V(D)$.
Now $\{x,a,w,y\}$ induces a claw unless one of $\{wy,wa,ay\}$ is an edge.
Because $wy$ gives a 4-cycle $wyxvw$  with chord $wx$ and $wa$ gives a 4-cycle $waxvw$ with chord $wx$, it must be the case that $ay$ is an edge.
Note that $\deg_G(x)=4$ so we may add $x$ to $F$ (now $F=\{x\}$).

Because $n>6$ we look for a neighbor for a vertex $b\in V(G)-(V(D)\cup\{a\})$.
Suppose that both $v$ and $y$ can be added to $F$.
Then $z$ must also be added to $F$, as the edge $bz$ would give an induced claw $\{z,b,v,y\}$.
Now $w$ and $a$ are the only unfrozen vertices, so we may assume by symmetry that $ba$ is an edge.
Now $\{w,v,x,a,b\}$ induces a $Z_2$ unless $wb$ is an edge, as $wa$ has already been ruled out and $v$ and $x$ are frozen by supposition.
If $wb$ is an edge, then note that swapping the labels of $v,z,y$ with $w,b,a$, respectively, does not change the graph. 
Thus an additional edge to $w,b,$ or $a$ is equivalent to an additional edge to $v,z,$ or $y$ which are all frozen.
Therefore all vertices can be added to $F$. This implies that $n=7$, which is a contradiction.

Thus it follows that, without loss of generality, $by$ is an edge.
To avoid an induced claw on $\{y,b,z,x\}$ we must have the edge $bz$.
Now $\{w,v,x,y,b\}$ induces a $Z_2$.
Because the edges $yw$ and $yv$ create chorded 4-cycles on $\{w,v,x,y\}$ and the edge $bv$ would give the 4-cycle $bvzyb$ with chord $bz$, it follows that $bw$ must be an edge to eliminate this induced $Z_2$.
Note that $\deg_G(y)=4$, so we also add $y$ to $F$ (now $F=\{x,y\}$).

Let $R=V(D)\cup\{a,b\}$.  Because $G$ is 2-connected and $n>7$, there exists a vertex $c\in V(G)-R$ with a neighbor in $R-\{a\}$.  
Because $x$ and $y$ are frozen, and $G[R-\{a\}]$ is isomorphic to $K_3\,\square\, K_2$, the edges $cw,cv,cz,$ and $cb$ are all equivalent.
We assume, without loss of generality, that $cz$ is an edge (see Figure \ref{fig:Z2Conj1}).

\begin{figure}[h]
\centering
\begin{minipage}{.4\textwidth}
\centering
\includegraphics[width=3cm]{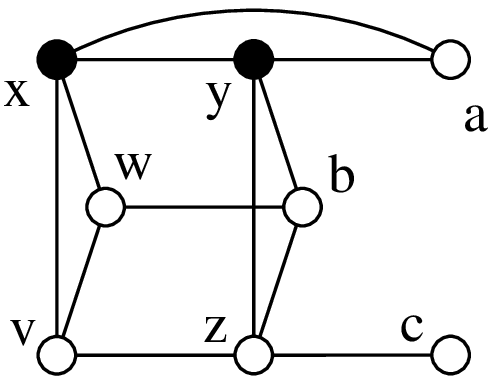}
\caption{Adding $c$ to $G[R]$ in the proof of Theorem \ref{thm:Z2}.}
\label{fig:Z2Conj1}
\end{minipage}
\begin{minipage}{.1\textwidth}
\hspace{1cm}
\end{minipage}
\begin{minipage}{.4\textwidth}
\centering
\includegraphics[width=3cm]{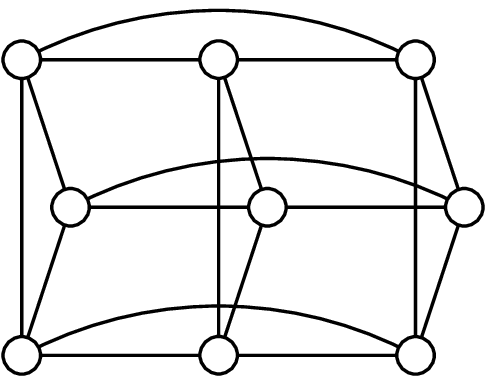}
\caption{$K_3\,\square\, K_3$ contains no chorded $C_4$. }
\label{fig:maxdeg4}
\end{minipage}
\end{figure}

Now to prevent $\{z,c,v,y\}$ from inducing a claw we must have the edge $cv$.
Because the degree of $v$ and $z$ are now 4, we add them to $F$ (so $F=\{x,y,v,z\}$).
To prevent $\{c,v,z,x,a\}$ from inducing a $Z_2$ we must have the edge $ca$ as these are the only unfrozen vertices.

Because $n>8$ there exists another vertex $d\in V(G)-(R\cup\{c\})$.
By symmetry any edge from $d$ to one of the unfrozen vertices $\{w,a,b,c\}$ is equivalent so we may assume without loss of generality that $dw$ is an edge.
To prevent $\{w,x,d,b\}$ from inducing a claw $db$ must be an edge.
Now to prevent $\{w,b,d,y,a\}$ from inducing a $Z_2$ we must have the edge $da$ because $wa$ give a 4-cycle $waxvw$ with chord $wx$ and $ba$ gives a 4-cycle $bayzb$ with chord $by$.
Now that $da$ is an edge, we must also have the edge $dc$ to prevent $\{a,d,c,x\}$ from inducing a claw.
But now $G$ is isomorphic to $K_3\,\square\, K_3$ (see Figure \ref{fig:maxdeg4}), all vertices have degree 4 and are thus frozen, and $n=9$.  This is a contradiction, so the theorem is proven.
\end{proof}

As $Z_1$ is an induced subgraph of $Z_2$, Theorem \ref{thm:z1} implies the following corollary.

\begin{cor} Let $G$ be a 2-connected graph of order $n\ge 10$.  If $G$ is $\{K_{1,3}, Z_1\}$-free, then $G=C_n$ or $G$ is chorded pancyclic. 
\end{cor}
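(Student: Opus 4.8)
The plan is to reduce the corollary to the theorem just proved by comparing the two forbidden-subgraph hypotheses rather than by re-running any cycle-chasing argument. The central observation is that $Z_1$ arises from $Z_2$ by deleting the far endpoint of the pendant path, and that this deletion leaves an \emph{induced} subgraph. Concretely, if the triangle of $Z_2$ is $\{a,b,c\}$ with pendant path $a\,d\,e$ attached at $a$, then the four vertices $\{a,b,c,d\}$ induce precisely a copy of $Z_1$ inside $Z_2$. Hence $Z_1$ is an induced subgraph of $Z_2$.

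From this I would extract the key logical step. Suppose $G$ contains an induced copy of $Z_2$ on vertices $\{a,b,c,d,e\}$. Because the copy is induced, the restriction of $G$ to any subset of these five vertices agrees with the corresponding restriction inside $Z_2$; in particular $G[\{a,b,c,d\}]$ is an induced $Z_1$. Thus every graph containing an induced $Z_2$ also contains an induced $Z_1$, and taking the contrapositive, every $Z_1$-free graph is $Z_2$-free.

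Consequently any $\{K_{1,3},Z_1\}$-free graph is automatically $\{K_{1,3},Z_2\}$-free, so the hypotheses of Theorem \ref{thm:Z2} are satisfied verbatim (the 2-connectivity and order conditions are identical). Applying that theorem immediately yields that $G=C_n$ or $G$ is chorded pancyclic, which is exactly the desired conclusion.

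The only point that requires care — and the one place an argument of this shape can go wrong — is the direction of the implication: forbidding the \emph{smaller} graph $Z_1$ is the \emph{more} restrictive hypothesis, so the class of $Z_1$-free graphs is contained in the class of $Z_2$-free graphs, not the reverse. Once this containment is pinned down correctly, no additional graph-theoretic work is needed and the corollary follows in a single line from Theorem \ref{thm:Z2}; there is no genuine obstacle beyond getting this inclusion the right way around.
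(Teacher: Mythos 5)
Your proof is correct and matches the paper's own one-line justification: since $Z_1$ is an induced subgraph of $Z_2$, every $\{K_{1,3},Z_1\}$-free graph is automatically $\{K_{1,3},Z_2\}$-free, so Theorem~\ref{thm:Z2} applies verbatim. You also correctly isolate the only delicate point --- that the class containment runs in the right direction and holds precisely because the copy of $Z_1$ inside $Z_2$ is induced.
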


We now turn our attention to forbidden paths; many of these theorems will be proven using a collection of lemmas.

\begin{theorem} Let $G$ be a 2-connected graph of order $n\ge 5$.  If $G$ is $\{K_{1,3}, P_4\}$-free, then $G$ is chorded pancyclic. 
\label{thm:p4ch}
\end{theorem}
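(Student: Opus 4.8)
The plan is to separate the chorded cycles of length at least $5$ from the chorded $4$-cycle: the forbidden $P_4$ makes the long cycles essentially automatic, so the only genuine work is producing a single chorded $4$-cycle.

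First I would observe that $G$ cannot be an $n$-cycle, since for $n\ge 5$ any four consecutive vertices of $C_n$ induce a $P_4$, and a $P_4$-free graph is therefore never equal to $C_n$. Hence Theorem \ref{thm:p4} applies and shows that $G$ is pancyclic (for $n\ge 6$; the single value $n=5$ I would dispose of by a direct check, as there $G$ is one of finitely many graphs, each readily seen to be chorded pancyclic). In particular $G$ contains a cycle of every length $4,5,\ldots,n$. Next I would note that every such cycle of length $m\ge 5$ is automatically chorded: if an $m$-cycle $C=u_1u_2\cdots u_mu_1$ had no chord, then $G[\{u_1,\ldots,u_m\}]$ would be exactly $C_m$, and since $m\ge 5$ the pairs $u_1u_3$, $u_2u_4$, $u_1u_4$ are all non-edges, so $u_1u_2u_3u_4$ would induce a $P_4$, contradicting $P_4$-freeness. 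Thus the $m$-cycle supplied by pancyclicity is a chorded $m$-cycle for every $5\le m\le n$.

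It remains to produce a chorded $4$-cycle, which is the crux. I would take a $4$-cycle $D=v_1v_2v_3v_4v_1$; if $D$ already has a chord we are done, so assume $D$ is induced, i.e.\ $v_1v_3,v_2v_4\notin E(G)$. Since $n\ge 5$ and $G$ is connected, some $x\in V(G)-V(D)$ is adjacent to a vertex of $D$, say (without loss of generality, by the symmetry of the induced $C_4$) $xv_1\in E(G)$. Then $\{v_1,x,v_2,v_4\}$ induces a claw unless $xv_2$, $xv_4$, or $v_2v_4$ is an edge; as $v_2v_4\notin E(G)$, we may assume $xv_2\in E(G)$. Now the path $v_4v_3v_2x$ induces a $P_4$ unless $xv_3$ or $xv_4$ is an edge (note that $v_2v_4\notin E(G)$), so $P_4$-freeness forces one of these. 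If $xv_3\in E(G)$, then $v_1xv_3v_2v_1$ is a $4$-cycle with chord $xv_2$; if instead $xv_4\in E(G)$, then $v_2xv_4v_1v_2$ is a $4$-cycle with chord $xv_1$. Either way $G$ contains a chorded $4$-cycle, completing the argument.

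The main obstacle is precisely this last step: the longer cycles come for free, but a chordless induced $C_4$ is itself allowed in a $P_4$-free graph, so one must bring in the extra vertex $x$ (available since $n>4$) and play claw-freeness against $P_4$-freeness to manufacture the chord. A cleaner but longer alternative, which also absorbs the $n=5$ base case uniformly, is to first establish the structural lemma that a connected $\{K_{1,3},P_4\}$-free graph is a join $G=A+B$ (every vertex of $A$ adjacent to every vertex of $B$) with neither $A$ nor $B$ containing an independent set of size three; taking $|A|\ge|B|$ forces $|A|\ge 3$, hence an edge $a_1a_2$ in $A$, and then a diamond (hence a chorded $4$-cycle) can be read off directly, either from two vertices of $B$ together with $a_1a_2$ when $|B|\ge 2$, or from a $P_3$ in $A$ together with the single vertex of $B$ when $|B|=1$, where $2$-connectivity rules out the only degenerate configuration $G=K_1+2K_2$.
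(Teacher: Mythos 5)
Your proof is correct and follows essentially the same route as the paper's: invoke Theorem \ref{thm:p4} for pancyclicity, observe that any chordless cycle of length at least $5$ contains an induced $P_4$, and then use the same claw/$P_4$ interplay with an external vertex $x$ adjacent to an induced $4$-cycle to force a chorded $C_4$. If anything, your write-up is slightly more careful than the paper's, since you explicitly rule out $G=C_n$ before applying Theorem \ref{thm:p4} (the paper leaves this implicit).
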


\begin{proof}
Let $G$ be a 2-connected graph of order $n\ge5$ that is $\{K_{1,3},P_5\}$-free.
One can easily verify that $G$ is chorded pancyclic if $n=5$.
Therefore we assume that $n\geq6$.
By Theorem \ref{thm:p4}, we know that $G$ is pancyclic.  
Suppose, for the sake of contradiction, that $G$ is not chorded pancyclic. 
 Let $m$ be the largest number with $4\le m\le n$ such that every $m$-cycle in $G$ is chordless.
 Any chordless $m$-cycle for $m>4$ contains an induced $P_4$, so it follows that $m=4$.

Consider a 4-cycle $C=v_1 v_2 v_3 v_4 v_1$ in $G$.  
Since $n\ge 5$ and $G$ is 2-connected, there exists a vertex $x\not \in V(C)$ such that $xv\in E(G)$ for some $v\in V(G)$.  
Without loss of generality, we may assume that $v_1 x\in E(G)$.  
To avoid an induced claw on $\{v_1,x,v_2,v_4\}$, one of $v_2v_4$, $xv_4$, or $xv_2$ must be in $E(G)$.
If $v_2v_4$ is an edge, then $C$ is a chorded 4-cycle, which is a contradiction.

Then by symmetry, we may assume without loss of generality that $xv_2$ is an edge.
Because $x v_1 v_4 v_3$ cannot be an induced $P_4$ subgraph of $G$, $G$ must contain either $xv_4$, $xv_3$, or $v_1v_3$ as an edge. 
The edge $v_1 v_3$ creates a chord of the 4-cycle $C$, a contradiction.  
The edge $v_3x$ yields the 4-cycle $v_1 x v_3 v_2 v_1$ with chord $v_2x$, a contradiction. 
Similarly, the edge $v_4x$ yields the 4-cycle $v_1 v_2 x v_4 v_1$ with chord $v_1x$, again a contradiction.
Thus, every 4-cycle in $G$ must be chorded.
\end{proof}

\begin{lem}\label{lem:ChordedC5}
Let $G$ be a 2-connected graph of order $n\ge 8$.  If $G$ is $\{K_{1,3}, P_5\}$-free and $G$ contains a $C_4$, then $G$ contains a chorded $C_5$.
\end{lem}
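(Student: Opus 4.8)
The plan is to isolate one easily-checked configuration and then force it to occur. The key observation is a reduction: if a claw-free graph contains a $4$-cycle $C^{*}=p_1p_2p_3p_4p_1$ and a vertex $w\notin V(C^{*})$ adjacent to at least two vertices of $C^{*}$, then it already contains a chorded $5$-cycle. Indeed, if $w$ is adjacent to two consecutive vertices, say $p_1$ and $p_2$, then $wp_1p_4p_3p_2w$ is a $5$-cycle with chord $p_1p_2$; if instead $w$ meets only the opposite pair $p_1,p_3$, then the claw on $\{p_1,w,p_2,p_4\}$ forces the edge $p_2p_4$, and $wp_1p_2p_4p_3w$ is a $5$-cycle with chord $p_1p_4$; any vertex adjacent to three or four vertices of $C^{*}$ meets a consecutive pair. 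Thus it suffices to exhibit, somewhere in $G$, a $4$-cycle carrying a doubly-attached external vertex.

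To do this I would start from the given $4$-cycle $C=v_1v_2v_3v_4v_1$. Since $G$ is $2$-connected and $n>4$, some external vertex $x$ is adjacent to $C$, say $xv_1\in E(G)$; if $x$ has a second neighbor on $C$ the reduction finishes immediately, so I may assume $x$ meets $C$ only in $v_1$, whereupon the claw on $\{v_1,x,v_2,v_4\}$ forces the chord $v_2v_4$ (the case where $v_1v_3$ is also present, i.e.\ $C=K_4$, being handled the same way). As $\deg_G(x)\ge 2$, fix a neighbor $y\notin V(C)$ of $x$. Using $v_1\not\sim v_3$, both $yxv_1v_2v_3$ and $yxv_1v_4v_3$ are induced copies of $P_5$ unless $y$ is adjacent to one of $v_2,v_3,v_4$, so $P_5$-freeness forces $y$ onto $C$.

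From here the favorable cases close quickly. If $y\sim v_2$ then $xv_1v_2yx$ is a new $4$-cycle and $v_4$ is an external vertex adjacent to its two consecutive vertices $v_1,v_2$, so the reduction applies; the case $y\sim v_4$ is symmetric, using the $4$-cycle $xv_1v_4yx$ and the external vertex $v_2$. If $y$ is adjacent to both $v_1$ and $v_3$, the reduction applies to $C$ itself. The only survivors are the cases in which $y$ (and, on iterating, every further vertex produced this way) attaches to $C$ in the single vertex $v_1$ or the single vertex $v_3$.

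The main obstacle is exactly this stubborn single-attachment case, where no doubly-attached vertex is produced and one instead accumulates a clique of vertices hanging off $v_1$ together with chordless $5$-cycles reaching $v_3$. To break it I would re-invoke $2$-connectivity: because $v_1$ is not a cut vertex, the clique piling up at $v_1$ must emit an edge to some vertex $w$ outside $N_G[v_1]$, and routing a path through $w$ back toward $v_3$ lets $P_5$-freeness force $w$ onto $C$ at a genuinely new vertex, eventually manufacturing a consecutive double attachment on some $4$-cycle. Showing that this process terminates---and that it must deposit the required configuration before the budget $n\ge 8$ is spent, rather than cycling forever---is the delicate point, and I expect it to require tracking the frozen neighborhoods of $v_1,v_2,v_3,v_4$ in the style of the proof of Theorem~\ref{thm:Z2}; this is where the hypothesis $n\ge 8$ is actually consumed.
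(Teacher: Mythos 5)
Your opening reduction (in a claw-free graph, any vertex outside a $4$-cycle with two neighbors on it yields a chorded $C_5$) is correct and tidy, and the forcing of the chord $v_2v_4$ and of a neighbor of $y$ on $C$ is sound. But the proposal has a genuine gap exactly where you concede one: the ``stubborn single-attachment case'' is not a residual nuisance to be cleaned up afterwards --- it is the entire content of the lemma. In the paper's proof this is the $2$-tab case (a path from a vertex of $C$ to the \emph{opposite} vertex through two outside vertices), and resolving it consumes nearly the whole argument: one forces the edge $wy$, shows the resulting $6$-vertex graph admits no further internal edge, introduces a seventh vertex $b$ and forces $bx$, $ba_1$, $ba_2$, shows the resulting graph $G[R']$ is again rigid, and only then does an eighth vertex produce the contradiction. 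Your plan --- ``routing a path through $w$ back toward $v_3$ \ldots{} eventually manufacturing a consecutive double attachment,'' with termination acknowledged as ``the delicate point'' --- is not an argument, and it cannot be made cheap: the $7$-vertex graph in Figure~\ref{fig:P5C5*} is $2$-connected, $\{K_{1,3},P_5\}$-free, contains a $C_4$, and has no chorded $C_5$. So the accumulation process you describe genuinely survives until that entire graph has been built and frozen; no contradiction is available earlier, and a correct proof must carry out that structural classification rather than assert that it ``eventually'' succeeds.

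There is also a smaller but real gap at the start: you claim the case $v_1v_3\in E(G)$ (so $G[V(C)]=K_4$) is ``handled the same way,'' but your very next step uses $v_1\not\sim v_3$ to produce the induced paths $yxv_1v_2v_3$ and $yxv_1v_4v_3$. If $G[V(C)]=K_4$ with $x$ attached only at $v_1$ and $y$ attached only at $x$, then these six vertices contain neither a claw nor an induced $P_5$, so neither forbidden-subgraph hypothesis fires and your argument stalls immediately. That case needs its own treatment (for instance via a shortest path from $x$ back to $C-v_1$ guaranteed by $2$-connectivity, which is essentially what the paper's $k$-tab machinery does uniformly and your setup does not).
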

\begin{proof}
Let $G$ be a 2-connected graph of order $n\ge 8$ that is $\{K_{1,3}, P_5\}$-free, and
let $C=wxyzw$ be a $4$-cycle in $G$.
Suppose, for the sake of contradiction, that $G$ does not have a chorded $C_5$.
Because $n>4$ and $G$ is 2-connected, there is a $k$-tab on the cycle $C$.  
Choose a $k$-tab $Q=a_0\cdots a_{k+1}$  which minimizes $k$.
Over all minimal $k$-tabs, choose one where $a_0a_{k+1}$ is an edge of $C$ if possible.
Without loss of generality, we may assume that $x=a_0$.

Suppose $Q$ is a 1-tab.  
If $a_2$ is $y$ (or $w$ by symmetry), then $xa_1yzwx$ is a 5-cycle with chord $xy$.
If $a_2=z$ then $\{x,a_1,y,w\}$ cannot be an induced claw.  But $a_1w$ and $a_1y$ contradict our choice of $Q$, so it follows that $wy$ is an edge.
Now $wyxa_1zw$ is a 5-cycle with chord $xw$.

Suppose that $k\geq 3$.
Then let $a_0'$ be a neighbor of $x=a_0$ on $C$ which is not $a_{k+1}$.
Now $a'_0xa_1a_2a_3$ is an induced $P_5$ unless there is another edge among $\{a'_0,x,a_1,a_2,a_3\}$.
However, each of these edges creates either a 1-tab or a 2-tab on $C$, which contradicts the minimality of $Q$.

Therefore, it follows that $k=2$.
If $a_3$ is $y$ (or $w$ by symmetry), then to avoid an induced claw on $\{x,a_1,y,w\}$ one of $wy,ya_1,wa_1$ must be an edge.
However, each of these create a chorded 5-cycle.
Thus $a_3=z$.
To prevent $\{x,a_1,y,w\}$ from inducing a claw without creating a 1-tab, we must have the edge $yw$.
One can check that any further edge among $R=\{x,y,w,z,a_1,a_2\}$ creates a chorded 5-cycle.
However, $n>6$, so there exists some $b\in V(G)-R$ that is adjacent to a vertex in $R$.

If $by$ is an edge (or $bw$ by symmetry) then to avoid an induced claw $\{y,b,x,z\}$, either $bx$, $bz$, or $xz$ is an edge.
All three give chorded 5-cycles, so we may add $y$ and $w$ to the frozen set $F$.
Now $F=\{y,w\}$.

Suppose that we can now add both $x$ and $z$ to $F$ so that $F=\{y,w,x,z\}$.
Then, without loss of generality, we may assume that $ba_1$ is an edge.  
But now $ba_1xwz$ is an induced $P_5$ because $x,w,z$ are all frozen.
This is a contradiction, and thus at least one of $\{x,z\}$ cannot be added to $F$.
In particular, let $bx$ be an edge.
To avoid an induced claw $\{x,b,a_1,w\}$ it follows that $ba_1$ is an edge.
Also, $bxwza_2$ cannot be an induced $P_5$, so there is another edge among these vertices.
Because $bz$ creates a shorter $k$-tab on $C$, it follows that $ba_2$ is an edge.
Let $R'=R\cup\{b\}$ and note that any additional edge in $G[R']$ creates a chorded 5-cycle.
By symmetry with $\{y,w\}$ we can now add $b$ and $a_1$ to $F$ (see Figure \ref{fig:P5C5*}).

\begin{figure}[h]
\centering
\includegraphics[width=3cm]{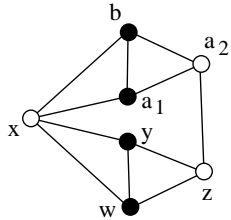}
\caption{The graph $G[R']$ from Lemma \ref{lem:ChordedC5}.}
\label{fig:P5C5*}
\end{figure}

However, $n>7$, so there exists some $c\in V(G)-R'$ that is adjacent to a vertex in $R'$.
If $cx$ is an edge for $c\in V(G)-R'$ then $\{x,c,b,w\}$ induces a claw as $b$ and $w$ are frozen.
Therefore, we can also add $x$ to $F$ and, without loss of generality, $cz$ is an edge.
But now $czwxb$ is an induced $P_5$, which is a contradiction.
\end{proof}

\begin{lem}\label{lem:ChordedC4}
Let $G$ be a 2-connected graph of order $n\ge 7$.  If $G$ is $\{K_{1,3}, P_5\}$-free and $G$ contains a chorded $C_5$, then it also contains a chorded $C_4$.
\end{lem}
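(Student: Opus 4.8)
The plan is to argue by contradiction, so I would assume that $G$ contains no chorded $C_4$ and work toward a forbidden configuration. The first observation is a reformulation: a chorded $C_4$ is exactly a copy of $K_4$ with one edge deleted, i.e.\ two triangles sharing an edge. Thus ``$G$ has no chorded $C_4$'' is equivalent to saying that no edge of $G$ lies in two triangles, i.e.\ every edge has at most one common neighbor. I would then combine this with Lemma \ref{lem:clique} to pin down local structure. If $N_G(x)$ is connected and traceable with a Hamiltonian path $u_1u_2\cdots u_d$ and $d\ge 3$, then $xu_1u_2u_3x$ is a $4$-cycle with chord $xu_2$, a chorded $C_4$; and if $N_G(x)$ is two disjoint cliques one of which has at least three vertices, then $x$ together with that clique contains $K_4$, again a chorded $C_4$. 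Hence under our assumption the neighborhood of every vertex induces at most two cliques, each of order at most two. In particular $\Delta(G)\le 4$, and two triangles of $G$ never share an edge.

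Next I would set up the chorded $C_5$ as a ``house.'' Writing it as a $5$-cycle $v_1v_2v_3v_4v_5v_1$ with chord $v_1v_3$, we get a triangle $v_1v_2v_3$ sitting on the $4$-cycle $Q=v_1v_3v_4v_5v_1$. If $Q$ has a chord ($v_1v_4$ or $v_3v_5$) we are immediately done, so I assume $Q$ is induced. I then record the forced non-edges: $v_2v_4,\,v_2v_5,\,v_1v_4,\,v_3v_5\notin E(G)$, since each would either chord $Q$ or create a second triangle on one of the edges $v_1v_2,\,v_2v_3,\,v_3v_4,\,v_4v_5$, contradicting the ``no shared edge'' property above.

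The heart of the argument is then a $2$-connectivity plus $P_5$-free push. First I would show, using the neighborhood structure, that one of $v_4,v_5$ must have a neighbor outside $H=\{v_1,\dots,v_5\}$: a neighbor of $v_1$ outside $H$ is forced (by claw-freeness at $v_1$ and the matching structure of $N_G(v_1)$) to be adjacent to $v_5$, and symmetrically for $v_3$, so if neither $v_4$ nor $v_5$ had an outside neighbor then every edge leaving $H$ would be incident to $v_2$, making $v_2$ a cut-vertex and contradicting $2$-connectivity. So, by the symmetry of the house, I may assume $t\sim v_4$ for some $t\notin H$. Claw-freeness at $v_4$ (with $v_3,v_5$ non-adjacent) forces $t\sim v_3$ or $t\sim v_5$. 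In the case $t\sim v_3$, diamond-freeness forces $t\not\sim v_1,v_2,v_5$, and then $t\,v_4\,v_5\,v_1\,v_2$ is an induced $P_5$, a contradiction. In the remaining case $t\sim v_5$ one checks $t\not\sim v_1,v_3$, and $\{v_1,\dots,v_5,t\}$ assembles into (most of) a triangular prism $K_3\,\square\, K_2$, with triangles $v_1v_2v_3$ and $v_4v_5t$ and rungs $v_1v_5$, $v_3v_4$.

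I expect the prism case to be the main obstacle, and this is exactly where the hypothesis $n\ge 10$... sorry, where the hypothesis $n\ge 7$ is used. The point is that the triangular prism $K_3\,\square\, K_2$ is itself $2$-connected, claw-free, $P_5$-free, and contains a chorded $C_5$ but \emph{no} chorded $C_4$; it is ruled out only by its order being $6<7$. So to finish I would invoke $2$-connectivity once more to attach a seventh vertex to this six-vertex configuration (the full prism, or the prism missing the rung $v_2t$). Claw-freeness forces the new vertex to be adjacent to one specific vertex of the configuration, and diamond-freeness then keeps it non-adjacent to enough others that it can be read off as one end of an induced $P_5$ running along a ``spine'' of the prism (for instance, attaching $p\sim v_1$ forces $p\sim v_5$ and $p\not\sim v_3,v_4,t$, giving the induced $P_5$ $p\,v_1\,v_3\,v_4\,t$). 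Verifying that every way of $2$-connectedly extending the prism past six vertices produces either a chorded $C_4$ or an induced $P_5$ is the bulk of the case-checking, but each individual case is routine given the strong neighborhood restrictions established at the outset.
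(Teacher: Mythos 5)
Your proposal is correct and follows essentially the same route as the paper's proof: both arguments assume no chorded $C_4$, force the chorded $C_5$ together with its outside attachments into the six-vertex configuration $K_3\,\square\,K_2$ (possibly missing one rung), and then use $n\ge 7$ with connectivity to attach a seventh vertex that produces an induced $P_5$. The bookkeeping differs --- the paper uses its ``frozen set'' device where you use the diamond-free/matching-neighborhood reformulation --- but the substance, including the extremal prism showing why $n\ge 7$ is needed, is the same.
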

\begin{proof}
Let $G$ be a 2-connected graph of order $n\ge 7$ that is $\{K_{1,3}, P_5\}$-free, and let $C=vwxyzv$ be a 5-cycle with chord $vx$.
Suppose, for the sake of contradiction, that $G$ has no chorded $C_4$.

We show that both $x$ and $v$ can be added to $F$.
If $ax\in E(G)$ for some $a\in V(G)-V(C)$, then to avoid a claw there must be some edge among $\{w,a,y\}$.
The edge $aw$ gives a 4-cycle $vwaxv$ with chord $wx$ and the edge $wy$ gives a 4-cycle $wyxvw$ with chord $wx$.
Thus $ay$ must be an edge of $G$.
Any further edge among $\{v,w,x,y,z,a\}$ yields a chorded $4$-cycle, but such an edge must exist because otherwise $ayzvw$ is an induced $P_5$.
Therefore, we conclude that $x$ (and $v$ by symmetry) can be added to $F$.

Because $G$ is 2-connected, there is a tab on $C$.
Therefore we may assume that $ay$ (or $az$ by symmetry) is an edge for some $a\in V(G)-V(C)$.
To avoid an induced claw on $\{y,a,x,z\}$, we must have the edge $az$.
By symmetry with $\{v,x\}$, both $z$ and $y$ can now be added to $F$.
Because $n>6$, there is a vertex $b$ adjacent to either $a$ or $w$.
If $ba$ is an edge then $bazvx$ is an induced $P_5$, and if $bw$ is an edge then $bwvzy$ is an induced $P_5$.
In either case we arrive at a contradiction, so we have proven the lemma.
\end{proof}

\begin{theorem} Let $G$ be a 2-connected graph of order $n\ge 8$.  If $G$ is $\{K_{1,3}, P_5\}$-free, then $G$ is chorded pancyclic. 
\label{thm:p5ch}
\end{theorem}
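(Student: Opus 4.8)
The plan is to combine the pancyclicity supplied by Theorem \ref{thm:p5} with the two preceding lemmas, reserving one short direct argument for the long cycles. First I would observe that $G\neq C_n$: for $n\geq 6$ any five consecutive vertices of $C_n$ induce a $P_5$, so a cycle on $n\geq 8$ vertices is not $P_5$-free. Hence the ``cycle'' alternative of Theorem \ref{thm:p5} is excluded and $G$ must be pancyclic; in particular $G$ contains a $C_4$, and indeed a cycle of every length $3\leq \ell\leq n$.

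Next I would dispose of the two short lengths for which the lemmas were designed. Since $G$ contains a $C_4$ and $n\geq 8$, Lemma \ref{lem:ChordedC5} yields a chorded $C_5$; feeding this into Lemma \ref{lem:ChordedC4} (whose hypothesis $n\geq 7$ is satisfied) yields a chorded $C_4$. This settles the cases $m=4$ and $m=5$.

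For the remaining lengths $6\leq m\leq n$, the key point is that $P_5$-freeness rules out chordless long cycles. Concretely, pancyclicity provides an $m$-cycle $C=v_1v_2\cdots v_m v_1$; if $C$ carried no chord, then because $m\geq 6$ the five consecutive vertices $v_1,v_2,v_3,v_4,v_5$ would span no edges beyond the path edges $v_1v_2,v_2v_3,v_3v_4,v_4v_5$ (every other pair would be a chord of $C$ and hence absent), so $\{v_1,v_2,v_3,v_4,v_5\}$ would induce a $P_5$, contradicting that $G$ is $P_5$-free. Thus every $m$-cycle with $m\geq 6$ already contains a chord, and in particular $G$ has a chorded $m$-cycle for each such $m$. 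Combining the three ranges shows $G$ possesses a chorded cycle of every length $4\leq m\leq n$, so $G$ is chorded pancyclic.

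I do not anticipate a genuine obstacle: the substantive work has been pushed into Lemmas \ref{lem:ChordedC5} and \ref{lem:ChordedC4}, which handle exactly the short lengths $m=4,5$ where the ``five consecutive vertices'' argument degenerates, since on a $C_4$ or $C_5$ the five consecutive vertices wrap around and close up rather than inducing a $P_5$. The only routine point is to verify that the various order thresholds ($n\geq 6$ for Theorem \ref{thm:p5}, $n\geq 7$ for Lemma \ref{lem:ChordedC4}, $n\geq 8$ for Lemma \ref{lem:ChordedC5}) are all subsumed by the standing hypothesis $n\geq 8$, which they are.
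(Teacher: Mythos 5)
Your proposal is correct and follows essentially the same route as the paper's own proof: invoke Theorem \ref{thm:p5} for pancyclicity, apply Lemma \ref{lem:ChordedC5} followed by Lemma \ref{lem:ChordedC4} to obtain the chorded cycles of lengths $5$ and $4$, and observe that any chordless cycle of length greater than $5$ contains an induced $P_5$. Your explicit check that $G\neq C_n$ (so the cycle alternative of Theorem \ref{thm:p5} cannot occur when $n\geq 8$) is a small point the paper leaves implicit, but it does not alter the argument.
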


\begin{proof}
From Theorem \ref{thm:p5} we know $G$ must be pancyclic.  
Then by applying Lemma \ref{lem:ChordedC5} followed by Lemma \ref{lem:ChordedC4}, we find a chorded $m$-cycle in $G$ for $m=4,5$.
Any chordless $m$-cycle for $m>5$ contains an induced $P_5$.
Therefore $G$ contains a chorded $m$-cycle for $4\leq m\leq n$.
\end{proof}

Note that the graph in Figure \ref{fig:P5C5*} shows that Theorem \ref{thm:p5ch} is sharp.
We now turn to $P_6$ as a forbidden subgraph.  We will prove our result using three lemmas.


\begin{lem}\label{lem:P6C5*}
Let $G$ be a 2-connected graph of order $n\ge 11$.  If $G$ is $\{K_{1,3}, P_6\}$-free and $G$ contains a $C_4$, then $G$ contains a chorded $C_5$.
\end{lem}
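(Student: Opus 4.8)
The plan is to argue by contradiction in the same spirit as Lemma~\ref{lem:ChordedC5}, the extra difficulty being that $P_6$-freeness is weaker than $P_5$-freeness and so forces a longer tab analysis. Assume $G$ is $\{K_{1,3},P_6\}$-free, contains a $4$-cycle $C=wxyzw$, but has no chorded $C_5$. Since $G$ is $2$-connected and $n>4$, there is a $k$-tab on $C$; I would choose one, $Q=a_0a_1\cdots a_{k+1}$, minimizing $k$, and among these choose one with $a_0a_{k+1}\in E(C)$ if possible, then set $x=a_0$ without loss of generality. The first step is to bound $k$: if $k\ge 4$, let $a_0'$ be a neighbor of $x$ on $C$ other than $a_{k+1}$. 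The minimality of $Q$ forbids every chord of the path $a_0'xa_1a_2a_3a_4$ except possibly $a_0'a_1$, and a short case check (using the secondary choice of $Q$) eliminates that one as well, so $a_0'xa_1a_2a_3a_4$ is an induced $P_6$, a contradiction. Hence $k\le 3$.

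I would then dispatch the three cases $k=1,2,3$ in increasing order of difficulty. For $k=1$ the argument mirrors Lemma~\ref{lem:ChordedC5}: if the far endpoint $a_2$ is a neighbor of $x$ on $C$ we get a chorded $C_5$ at once, while if $a_2$ is the vertex $z$ opposite $x$, then avoiding the induced claw on $\{x,a_1,y,w\}$ forces the edge $wy$ (the alternatives $a_1w,a_1y$ being excluded by the choice of $Q$), and $wyxa_1zw$ is then a chorded $C_5$. The case $k=2$ runs parallel to the corresponding part of Lemma~\ref{lem:ChordedC5}: the same claw arguments force the far endpoint to be $z$ and force the edge $yw$, after which every additional edge inside $\{w,x,y,z,a_1,a_2\}$ creates a chorded $C_5$. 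The essential change is that, since we may only forbid $P_6$ rather than $P_5$, closing this case requires tracking one more external vertex before an induced path on six vertices appears; this is exactly where the hypothesis $n\ge 11$ begins to be used.

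The main obstacle is the case $k=3$, where the tab $xa_1a_2a_3a_4$ is long enough that the natural extension $a_0'xa_1a_2a_3a_4$ need not be an induced $P_6$: when the far endpoint $a_4$ equals the vertex $z$ opposite $x$, the chord $a_0'a_4$ lies on $C$, so $P_6$-freeness yields no immediate contradiction and a genuine structural analysis is unavoidable. Here I expect to use claw-freeness repeatedly to force triangles along the tab, to verify that most of the resulting edges create a chorded $C_5$, and to freeze each vertex whose neighborhood becomes completely determined. As in the $K_3\,\square\,K_2$ reductions of Theorem~\ref{thm:Z2}, I would exploit the symmetry of the forced subgraphs to collapse the number of subcases and to relabel any newly attached vertex into a canonical position.

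The proof would conclude, as in the earlier lemmas, by showing that the forced configuration admits no further extension. Once sufficiently many vertices are frozen, any vertex attached to the structure---guaranteed to exist because $n\ge 11$---must either complete a chorded $C_5$, create an induced claw, or create an induced $P_6$. Since all three outcomes are impossible, we obtain the desired contradiction, and therefore $G$ contains a chorded $C_5$. I expect the threshold $n\ge 11$ to be precisely the order at which the largest frozen configuration arising in the $k=3$ analysis is exhausted, so the hard part will be carrying out that $k=3$ bookkeeping carefully enough to guarantee the final attached vertex always triggers one of the three contradictions.
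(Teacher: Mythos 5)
Your framework is exactly the paper's: assume no chorded $C_5$, take a $k$-tab $Q$ on the $4$-cycle minimizing $k$ (with the secondary choice of endpoints adjacent on $C$ when possible), bound $k\le 3$ via an induced $P_6$, and split into cases $k=1,2,3$. Your $k=1$ case is complete and correct, and your observation about why $k=3$ cannot be dismissed by the $P_6$ argument (the potential chord $a_0'a_4$ lies on $C$ when the tab endpoints are opposite corners) is accurate. But the proposal has a genuine gap: the cases $k=2$ and $k=3$, which constitute essentially the entire content of the lemma, are never carried out. For $k=2$ you assert that the argument ``runs parallel'' to Lemma~\ref{lem:ChordedC5} and merely ``requires tracking one more external vertex''; in fact this case splits into two substantial subcases (depending on whether the tab endpoints can be frozen), and the harder one requires invoking Lemma~\ref{lem:clique} to force $N_G(x)$ to be two disjoint cliques, introducing a sequence of external vertices $b,b',c,c',d$, freezing vertices one at a time, and exploiting symmetry of forced $K_4-e$ configurations before reaching contradictions when $n=9$ and $n=10$. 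For $k=3$ you explicitly defer the work (``I expect to use claw-freeness repeatedly\ldots the hard part will be carrying out that $k=3$ bookkeeping''), where the paper must distinguish whether the tab endpoints are adjacent on $C$ or not and run a separate multi-page frozen-set analysis for each. A plan that says the bookkeeping should close is not evidence that it does close; indeed the only way to know the threshold is $n\ge 11$ (rather than, say, $n\ge 9$ or $n\ge 13$) is to actually exhaust the maximal frozen configurations, which you have not done.

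One smaller point in your favor: you correctly flag that the $k\ge 4$ reduction needs care about the possible chord $a_0'a_1$, which does not by itself violate minimality of $k$; the paper's own text glosses over this. But your claim that the secondary choice of $Q$ ``eliminates'' it is itself only asserted, not argued, so even the one step where you improve on the paper's exposition is left incomplete.
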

\begin{proof}
Let $G$ be a 2-connected graph of order $n\ge 11$ that is $\{K_{1,3}, P_6\}$-free, and let $C=vwxyv$ be a 4-cycle in $G$.
Suppose, for the sake of contradiction, that $G$ does not have a chorded $C_5$.
Because $G$ is 2-connected and $n>5$, there is a $k$-tab on the cycle $C$.  
We choose a $k$-tab $Q=a_0a_1\cdots a_{k+1}$ that minimizes $k$.  
Over all minimal $k$-tabs, choose one where $a_0a_{k+1}$ is an edge of $C$ if possible.
If $k\geq4$, then let $a'_0$ be a neighbor of $a_0$ on $C$ which is not $a_{k+1}$ and now $a'_0a_0a_1\cdots a_{4}$ is an induced $P_6$.  Therefore we may assume that $1\leq k\leq 3$.
\vspace{.1in}

\noindent{\it Case 1.} Suppose that $Q$ is a 1-tab.

If $a_0a_2$ is an edge of $C$ then $G[\{v,w,x,y,a_1]$ contains a chorded 5-cycle.  
Otherwise, we may assume $a_0=v$ and $a_2=x$.  
To avoid a claw centered at $v$ or a 1-tab with endpoints adjacent in $C$ it follows that $wy$ is an edge.  
Now $va_1xywv$ is a 5-cycle with chord $wx$.
\vspace{.1in}

\noindent{\it Case 2.} Suppose that $Q$ is a 2-tab.

First, we show that $a_0a_3$ cannot be an edge in $C$.
If $a_0a_3$ is an edge, we may assume it is $vw$.
To avoid a claw induced by $\{v,w,y,a_1\}$ or a 1-tab on $C$, we must have $wy$ as an edge.  
But now $va_1a_2wyv$ is a 5-cycle with chord $vw$.
Therefore we will assume that $v=a_0$ and $x=a_3$.

Note that $wy$ must be an edge to avoid an induced claw $\{v,w,y,a_1\}$ or a 1-tab on $C$.  
If $w$ (or $y$ by symmetry) has another neighbor $z\in V(G)-V(C)$ then to avoid an induced claw $\{w,z,v,x\}$ or a 1-tab on $C$ we must have the edge $vx$.  
But now $va_1a_2xwv$ is a 5-cycle with chord $vx$. 
Therefore, $w$ and $y$ are added to $F$ for the remainder of this case.
\vspace{.1in}

\noindent{\it Subcase 2.1.} Suppose that $v$ and $x$ can be added to $F$ so that $F=\{w,y,v,x\}$.

Because $G$ is 2-connected and $n\geq8$, there exist distinct vertices $b_1$ and $b_2$ so that $b_1a_1$ and $b_2a_2$ are edges.
To avoid an induced claw on either $\{a_1,b_1,v,a_2\}$ or $\{a_2,b_2,v,a_1\}$ we must also have the edges $b_1a_2$ and $b_2a_1$.
Also $\{a_1,b_1,b_2,v\}$ cannot be an induced claw so $b_1b_2$ is an edge.

Because $n>8$ there is another vertex $c$.  
If $ca_1$ (or $ca_2$ by symmetry) is an edge, then either $\{a_1,c,b_1,v\}$ is an induced claw or $a_1cb_1b_2a_2a_1$ is a 5-cycle with chord $a_1b_1$.
Thus, without loss of generality we let $cb_1$ be an edge.  
But now $cba_1vyx$ is an induced $P_6$, which is a contradiction.
\vspace{.1in}

\noindent{\it Subcase 2.2.} Suppose that at least one of $v$ and $x$ cannot be added to $F$. 

In particular, we will assume that $bx$ is an edge for a vertex $b\notin V(C)\cup V(Q)$.
The current state of $G$ (except for the edge $ba_2$ which we show next) is given in Figure \ref{fig:P6Case2}.
Let $R=\{v,w,x,y,a_1,a_2,b\}$.
Because $w,y\in F$, the neighborhood of $x$ is not traceable and, by Lemma \ref{lem:clique}, $N_G(x)$ must be two disjoint cliques.
Therefore $ba_2$ is an edge.

First, we show that $ba_1$ cannot be an edge.
If $ba_1$ is an edge, then $G[V(C)]$ and $G[\{x,a_2,b,a_1\}]$ are both isomorhpic to $K_4-e$.
By symmetry, we may add $a_2$ and $b$ to $F$.
Because $N_G(x)$ is two disjoint cliques, we may also add $x$ to $F$.
Because $G$ is 2-connected and $n\geq9$, there exist distinct vertices $c,c'\in V(G)-R$ so that $cv$ and $c'a_1$ are edges.
To avoid the induced claw $\{a_1,c',v,b\}$ we also have the edge $c'v$.
By Lemma \ref{lem:clique} the neighborhood of $v$ is two disjoint cliques, as it is not traceable.
Thus $cc'$ and $ca_1$ are also edges.
If $N_G(v)$ contains a $K_4$, then $G$ has a chorded 5-cycle, so $v$ must now be added to $F$.
By symmetry, $a_1$ is also added to $F$.
If $G$ has another vertex $d$ then either $dcvyxb$ or $dc'vyxb$ is an induced $P_6$, so $n=9$ which is a contradiction.
Therefore $ba_1$ is not an edge, and to avoid any chorded 5-cycle it follows that the induced subgraph $G[R]$ is exactly the graph shown in Figure \ref{fig:P6Case2}.

\begin{figure}[h]
\centering
\includegraphics[width=3.6cm]{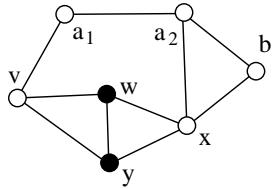}
\caption{The graph $G[R]$ is the starting point for Subcase 2.2 of Lemma \ref{lem:P6C5*}.}
\label{fig:P6Case2}
\end{figure}

Now we show that $b$ cannot be added to $F$ yet.
Suppose that $b$ is added to $F$, so that $F=\{w,y,b\}$.
Because $N_G(x)$ is two disjoint cliques, we may also add $x$ to $F$.
Either $v$ can now be added to $F$, or $v$ has another neighbor $c$.
If $vc$ is an edge then, because $N_G(v)$ is two disjoint cliques, $ca_1$ is an edge.
By symmetry with $b,x$ we can add $c$ and $v$ to $F$.
However, regardless of whether $v$ has degree 3 or 4 in $G$, since $G$ is 2-connected and $n>8$ we may assume that $a_1$ has a neighbor $d\notin R$.
But now $da_1vyxb$ is an induced $P_6$, which is a contradiction.

Because $b$ cannot be added to $F$, it follows that $b$ has a neighbor $b'\in V(G)-R$.
Now $b'bxyva_1$ cannot induce a $P_6$ so $b'$ has another neighbor among these vertices.
Note that $b'a_1$ creates a 5-cycle $b'a_1a_2xbb'$ with chord $ba_2$ so this cannot be an edge.
Thus, the edge $b'v$ would create an induced claw $\{v,b',a_1,y\}$, and it follows that $b'x$ must be an edge.
Because $N_G(x)$ is two disjoint cliques, $b'a_2$ is an edge.
We can now add $x$ to $F$, because otherwise one of the cliques in $N_G(x)$ would be a $K_4$ which gives a chorded $5$-cycle in $N_G[x]$.
If $b$ has a neighbor $c\notin R\cup\{b'\}$, then to prevent $cbxyva_1$ being an induced $P_6$ either $cv$ or $ca_1$ is an edge.
When $ca_1$ is an edge then $ca_1a_2b'bc$ is a 5-cycle with chord $a_2b$, and when $ca_1$ is not an edge then $cv$ is an edge and $\{v,c,a_1,y\}$ induces a claw.
Therefore $b$ (and $b'$ by symmetry) are also added to $F$.

Now the only unfrozen vertices are $v,a_1,$ and $a_2$.
We claim that $a_2$ can also be added to $F$.
Suppose instead that $a_2$ has a  neighbor $c\notin R\cup\{b'\}$.
Then $ca_1$  is also an edge to prevent $\{a_2,c,a_1,x\}$ from inducing a claw. 
Also, $ca_1vyxb$ is not an induced $P_6$ so $cv$ is an edge.
Now $G[\{v,a_1,a_2,c\}]$ is isomorphic to $G[V(C)]$, so by symmetry $\{c,a_1,a_2\}$ are added to $F$.
Because $G$ has no cut vertex, $v$ is also added to $F$.
All vertices are frozen, so it follows that $n=9$ which is a contradiction.

Therefore $a_2$ is added to $F$, as seen in Figure \ref{fig:P6SubSubCase}.  
Because $G$ is 2-connected and $n\geq10$, there exist distinct vertices $c$ and $c'$ so that $cv$ and $c'a_1$ are edges.
To avoid the induced claw $\{a_1,c',v,a_2\}$ we also have the edge $c'v$.
But $N_G(v)$ is two disjoint cliques, so $cc'$ and $ca_1$ are also edges.
By symmetry with $G[x,b,b',a_2]$, the vertices $v,c,c',a_1$ are added to $F$.
Thus $F=V(G)$, and it follows that $n=10$ which is a contradiction.

\begin{figure}[h]
\centering
\includegraphics[width=1.6in]{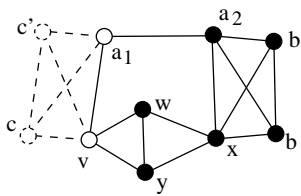}
\caption{The graph resulting from Subcase 2.2 in Lemma \ref{lem:P6C5*}. }
\label{fig:P6SubSubCase}
\end{figure}

\vspace{.1in}

\noindent{\it Case 3.} Suppose that $Q$ is a 3-tab.

Recall that $C=vwxyv$, and that no vertices are in $F$ yet.
Let $R=\{v,w,x,y,a_1,a_2,a_3\}$.
Note that no 4-cycle can have a 1-tab or a 2-tab, or we reduce to an earlier case.
\vspace{.1in}

\noindent{\it Subcase 3.1.} Suppose that $a_0a_4$ is an edge of $C$.  

In particular, we may assume that $a_0=v$ and $a_4=w$.
To prevent $\{v,w,y,a_1\}$ and $\{w,v,x,a_3\}$ from inducing claws without introducing a chorded 5-cycle, $vx$ and $wy$ must be edges.
Note that if $R$  induces any other edge in $G$ then there is a 1-tab or 2-tab on $C$, which is a contradiction.
We add $y$ (and $x$ by symmetry) to $F$ by the following argument.
If $by$ is an edge, then to prevent $byva_1a_2a_3$ being an induced $P_6$ there must be another neighbor of $b$.  
Because $bv$, $ba_1$, and $ba_3$ all create 1- or 2-tabs it follows that only $ba_2$ can be an edge.
But then $\{a_2,a_1,a_3,b\}$ induces a claw, because $a_1a_3$ would make a 2-tab on $C$.

We now show that $v$ and $w$ can also be added to $F$.
If $v$ has a neighbor $b\in V(G)-R$ then, to prevent the induced claw $\{v,b,a_1,y\}$, $ba_1$ is also an edge.
To prevent $ba_1a_2a_3wx$ being an induced $P_6$ there must be another neighbor of $b$ among those vertices.  
However, $bw$ and $ba_3$ create 1- and 2-tabs on $C$.
Therefore $ba_2$ is an edge, but now $a_2a_3wv$ is a 2-tab on the 4-cycle $a_2a_1vba_2$, which is a contradiction.
Therefore $F=\{x,y,v,w\}$.

Because $G$ is 2-connected and $n>7$ there is a tab on $G[R]$.
Because only $a_1,a_2,a_3$ are unfrozen we may assume that $ba_1$ (or $ba_3$ by symmetry) is an edge for some $b\in V(G)-R$.
To prevent an induced claw on $\{a_1,b,a_2,v\}$, we must also have the edge $ba_2$.
If $ba_3$ were also an edge then $a_1vwa_3$ would be a 2-tab on the 4-cycle $a_1a_2a_3ba_1$, so $ba_3$ cannot be an edge.
But now $a_1$ can be added to $F$ by the following argument.
If $a_1$ has another neighbor $c\in V(G)-R$ distinct from $b$ then by the same reasoning, we must have the edge $ca_2$ and we cannot have the edge $ca_3$.
By Lemma \ref{lem:clique}, the neighborhood of $a_1$ is two disjoint cliques so $bc$ is also an edge.
But now $G[\{a_1,a_2,b,c\}]$ is isomorphic to $G[V(C)]$ and by symmetry all of $a_1,a_2,b,c$ are added to $F$.
Because $a_3$ is not a cut-vertex in $G$ it follows that $n=9$, which is a contradiction.
Therefore, once we have the edge $ba_1$ we can immediately add $a_1$ to $F$.
Hence $F=\{x,y,v,w,a_1\}$.

Now $b,a_2,$ and $a_3$ are the only unfrozen vertices.
Suppose that $a_3$ has a neighbor $c\notin R\cup\{b\}$.
Then if $cb$ is not an edge then $ca_3wva_1b$ induces a $P_6$, and if $cb$ is an edge then $cba_1a_2a_3c$ is a 5-cycle with chord $ba_2$.
Both are contradictions, so $\deg_G(a_3)=2$ and $a_3$ is added to $F$.
Because $n>8$ there must be another vertex $c$, and now $b$ and $a_2$ are the only unfrozen vertices.
However, if $cb$ is an edge then $cba_1vwa_3$ is an induced $P_6$ and if $ca_2$ is an edge then $\{a_2,c,a_1,a_3\}$ induces a claw.
Both are contradictions, so the lemma is proven in this case.
\vspace{.1in}

\noindent{\it Subcase 3.2.} Suppose that $a_0$ and $a_4$ are not adjacent in $C$.  

In particular, let $a_0=v$ and $a_4=x$.
Recall that $F$ is empty for now.
To prevent $\{v,a_1,w,y\}$ from inducing a claw, and to avoid chorded 5-cycles, $wy$ must be an edge.
Note that now $vx$ cannot be an edge, or we reduce to Subcase 3.1.
Therefore $w$ (and $y$ by symmetry) cannot have any neighbor in $V(G)-R$ without inducing a claw or creating a chorded 5-cycle.
Thus $w$ and $y$ are added to $F$.

Now we show that at least one element of $\{x,v\}$ cannot be added to $F$ yet.
Suppose instead that $F=\{w,y,x,v\}$.
Then, because $G$ is 2-connected, there is a tab on $G[R]$ and we may assume that $a_1$ (or $a_3$ by symmetry) has a neighbor $b\in V(G)-R$.
To prevent the induced claw $\{a_1,b,a_2,v\}$ we must also have the edge $ba_2$.  
To prevent $ba_1vyxa_3$ from being an induced $P_6$ we must have the edge $ba_3$.  
But now $G[\{a_1,a_2,a_3,b\}]$ is isomorphic to $G[\{v,w,x,y\}]$, so by symmetry all vertices are added to $F$.
This implies that $n=8$, which is a contradiction.
Therefore, at least one of $x$ or $v$ has a neighbor in $V(G)-R$.

Suppose, without loss of generality, that $x$ has a neighbor $b\in V(G)-R$.
To prevent the induced claw $\{x,b,a_3,y\}$ we must also have the edge $ba_3$.  
Also, $bxyva_1a_2$ cannot be an induced $P_6$, so $b$ must have another neighbor among these vertices.
The edges $bv$ and $ba_1$ create shorter tabs on $C$, so it follows that $ba_2$ must be an edge (see Figure \ref{fig:P63Tabs}).

\begin{figure}[h]
\centering
\includegraphics[width=3.2cm]{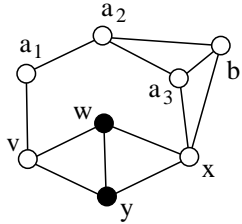}
\caption{A graph for Subcase 3.2 of Lemma \ref{lem:P6C5*}.}
\label{fig:P63Tabs}
\end{figure}

By symmetry with $w$ and $y$, the vertices $a_3$ and $b$ are also added to $F$, so $F=\{w,y,a_3,b\}$.
Because the neighborhood of $x$ is two disjoint frozen cliques, $x$ can also be added to $F$ by Lemma \ref{lem:clique}.
Because $n\geq9$, there is a tab with endpoints in $\{v,a_1,a_2\}$.
Without loss of generality, we may assume that $cv$ is an edge.
Now $ca_1$ must also be an edge to prevent $\{v,c,a_1,y\}$ inducing a claw, and $ca_2$ must be an edge to prevent $cvyxba_2$ from being an induced $P_6$.
But now $G[\{a_1,a_2,v,c\}]$ is isomorphic to $G[V(C)]$ and all vertices can be added to $F$.
Therefore $n=9$, which is a contradiction.
\end{proof}


\begin{lem}\label{lem:P6C4*}
Let $G$ be a 2-connected graph of order $n\ge 10$.  If $G$ is $\{K_{1,3}, P_6\}$-free and $G$ contains a chorded $C_5$, then it also contains a chorded $C_4$.
\end{lem}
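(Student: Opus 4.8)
The plan is to argue by contradiction. Suppose $G$ contains a chorded $C_5$ but no chorded $C_4$, and recall that a chorded $C_4$ is precisely a $K_4$ or a diamond $K_4-e$, so this assumption makes $G$ free of both. First I would extract the local rigidity it forces. Applying Lemma \ref{lem:k+1cc} with $k=3$ gives $\Delta(G)\le 4$, since a vertex of degree at least $5$ already produces a chorded $4$-cycle. Next, for any $x$ with $\deg_G(x)\ge 3$, Lemma \ref{lem:clique} says $N_G(x)$ is traceable or two disjoint cliques; but a traceable neighborhood on three or more vertices contains a subpath $u_1u_2u_3$, and then $\{x,u_1,u_2,u_3\}$ induces a diamond, while a clique of size $3$ in $N_G(x)$ gives a $K_4$. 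Hence every such $N_G(x)$ is two disjoint cliques, each of size at most $2$. This ``two small cliques'' description is the structural engine of the whole proof.

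I would then fix a chorded $C_5$, written $C=vwxyzv$ with chord $vx$, and note that it splits into the triangle $vwx$ and the $4$-cycle $vxyzv$, the latter necessarily chordless (a chord would be the desired chorded $C_4$). Following the pattern of Lemma \ref{lem:ChordedC4}, the first move is to freeze the chord endpoints. If $x$ has a neighbor $a\notin V(C)$, then avoiding the claw $\{x,a,w,y\}$ forces one of $wy,aw,ay$; but $wy$ yields the chorded $4$-cycle $wyxvw$ and $aw$ yields the chorded $4$-cycle $vwaxv$, each with chord $wx$, so $ay$ is forced. Since $x$ then has degree $4$ it may be added to $F$; the symmetric argument (with $z$ in place of $y$) freezes $v$, whose extra neighbor $b$, if present, must satisfy $bz\in E(G)$.

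From here the argument must diverge from the $P_5$ case, and this is where the difficulty lies. In Lemma \ref{lem:ChordedC4} the corresponding step produced the induced path $ayzvw$, a $P_5$ giving an instant contradiction; here that path has only five vertices and is permitted, so the extra neighbors $a,b$ may genuinely exist and the configuration is free to grow. The core of the proof is therefore a frozen-set case analysis on the still-unfrozen vertices---the apex $w$, the far corners $y,z$, and any neighbors $a,b$ and their successors---in which each step uses claw-freeness to force an edge, uses the two-cliques and degree constraints to place it, and then hunts for a chorded $C_4$, an induced $P_6$, or a newly saturated vertex to freeze. The recurring mechanism is that a fresh neighbor $p$ attached to one apex, together with the triangle on the opposite edge of the central $4$-cycle, creates an induced $P_6$ running from one apex side to the other (for instance a path of the shape $a\,p\,w\,v\,z\,c$), so each attempted extension either terminates the structure or collapses at once.

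The hard part is the bookkeeping required to show that every way of avoiding a chorded $C_4$ funnels into one of a few fully determined graphs, all of order at most nine. The extremal example is the central $4$-cycle with a triangle glued to each of its four edges, an $8$-vertex graph that is $\{K_{1,3},P_6\}$-free, contains a chorded $C_5$, and genuinely has no chorded $C_4$; the order-nine graph $K_3\,\square\, K_3$ (used elsewhere in the paper as a chordless-$C_4$ example) plays the analogous terminal role. Because all of these rigid graphs have at most nine vertices, the hypothesis $n\ge 10$ supplies one further vertex, which must attach to an already-frozen neighborhood and thereby create either a claw or an induced $P_6$. That final contradiction, rather than any single early observation, is what closes the proof; the essential subtlety absent in the $P_5$ setting is that the contradictions surface only after the forced structure has been built out to its full bounded size, so the case analysis cannot be cut short.
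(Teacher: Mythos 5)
Your preliminary reductions are correct and coincide with the paper's: Lemma \ref{lem:k+1cc} with $k=3$ gives $\Delta(G)\le 4$; your observation that every neighborhood of a degree-$\ge 3$ vertex must be two disjoint cliques of size at most two is valid (and slightly sharper than what the paper states explicitly); and your freezing of the chord endpoints---forcing $ay$ from the claw $\{x,a,w,y\}$ after ruling out $wy$ and $aw$ as chorded-$C_4$ producers---is exactly the opening move of the paper's Case 2. You have even correctly identified the two terminal configurations: the $8$-vertex graph formed by gluing a triangle to each edge of a $4$-cycle is precisely where the paper's Subcase 2.2 terminates, and a $4$-regular $9$-vertex graph is where its Subcase 2.1 terminates.

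The genuine gap is that the proposal stops exactly where the proof has to start. The entire content of the lemma is the claim you state but never establish: that \emph{every} way of avoiding a chorded $C_4$ ``funnels into one of a few fully determined graphs, all of order at most nine.'' Saying that each attempted extension ``either terminates the structure or collapses at once'' describes the desired outcome; it is not an argument, and nothing you write excludes, for instance, an arbitrarily large chorded-$C_4$-free structure in which fresh vertices keep attaching to unfrozen vertices without ever producing a claw or an induced $P_6$. Ruling that out is precisely the forcing analysis that occupies the paper's proof: the case in which both $x$ and $v$ freeze at degree $3$ with no outside neighbors (your sketch notes $a,b$ exist only ``if present'' but never treats their absence, which in the paper is a separate Case 1 ending in a direct $P_6$ contradiction using 2-connectivity and $n>7$); then the branch on whether $y$ and $v$, and later $z$ and $v$, can be frozen, with each branch forcing specific edges ($by$ and $bz$, then either $cz,cv$ and the growth to the $9$-vertex graph, or $cb,ca,cw,bw$ and the $8$-vertex graph) until the frozen set exhausts $V(G)$ and the order bound contradicts $n\ge 10$. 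Until that case analysis is actually executed---or replaced by some other finiteness argument---what you have is a correct plan with the proof missing from its middle.
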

\begin{proof}
Let $G$ be a 2-connected graph of order $n\ge 10$ that is $\{K_{1,3}, P_6\}$-free, and let $C=vwxyzv$ be a 5-cycle with chord $vx$.  
Suppose, for the sake of contradiction, that $G$ does not have a chorded $C_4$.
Then there are no additional edges in $G[V(C)]$ and,
by Lemma \ref{lem:k+1cc}, $\Delta(G)\leq4$.
Because $n>5$, there is a vertex $a\in V(G)-V(C)$ adjacent to a vertex of $C$.
\vspace{.1in}

\noindent{\it Case 1.} Suppose that $F=\{x,v\}$.

Because there is a $k$-tab on $C$, we may assume without loss of generality that $ay$ is an edge.
To avoid an induced claw on $\{y,x,z,a\}$ it follows that $az$ must also be an edge of $G$.
By symmetry we may now add $z$ and $y$ to $F$.
Because $n>7$, to avoid a cut-vertex in $G$ there must be two distinct vertices $b$ and $c$ where $ab$ and $wc$ are edges in $G$.

If $aw$ is an edge, then to avoid a claw on $\{a,w,b,y\}$, we must also have the edge $wb$.
But then $\deg_G(w)\geq5$ which condtradicts $\Delta(G)\leq 4$.  
Therefore, $aw$ is not an edge in $G$.

If $ac$ is an edge, then to avoid a claw on $\{a,b,c,z\}$, we must also have the edge $bc$.
Now $bcwxyz$ is an induced $P_6$ unless $bw$ is an edge.  
However, if $bw$ is an edge then $bwcab$ is a 4-cycle with chord $bc$.
Therefore $ac$ (and by symmetry $bw$) is not an edge in $G$.

Now $bayxwc$ is an induced $P_6$ if $bc$ is not an edge, and $cbayxv$ is an induced $P_6$ if $bc$ is an edge.
\vspace{.1in}

\noindent{\it Case 2.} Suppose that at least one of $v$ and $x$ cannot be added to $F$.

In particular, we will assume without loss of generality that $ax$ is an edge in $G$.
To avoid chorded 4-cycles or $\{x,y,a,w\}$ inducing a claw, $ay$ must also be an edge in $G$.
Because $\Delta(G)\leq4$ we can now let $F=\{x\}$.
Let  $R=\{a,v,w,x,y,z\}$ and note that any additional edge in $G[R]$ creates a chorded $C_4$.
Because $n>6$ there must be a vertex $b\in V(G)-R$ adjacent to a vertex of $R$.

We cannot add both $v$ and $y$ to $F$ by the following argument. 
If $v$ and $y$ are added to $F$ then $z$ must also be added to $F$, as the edge $zb$ would create an induced claw on $\{z,b,y,v\}$.
Then only $w$ and $a$ are unfrozen, so we may assume that $ba$ is an edge.
Now $bayzvw$ is an induced $P_6$ unless $bw$ is also an edge.
By symmetry with $\{v,y,z\}$, the vertices $a,w,b$ are added to $F$.
Thus $|V(G)|=7$ which is a contradiction.

Therefore we may assume, without loss of generality, that $by$ is an edge. 
To avoid $\{y,b,a,z\}$ inducing a claw or a chorded $C_4$, $bz$ must also be an edge.
Because $\Delta(G)\leq4$, we add $y$ to $F$ so that $F=\{x,y\}$.
Because $n>7$ there is another vertex $c$ adjacent to a vertex in $R\cup\{b\}$.
\vspace{.1in}

\noindent{\it Subcase 2.1.} Suppose that at least one of $z$ and $v$ cannot be added to $F$.

Then we may assume, without loss of generality, that $cz$ is an edge.
To avoid $\{z,c,b,v\}$ being a claw or a chorded $C_4$ on $\{z,c,b,y\}$, $cv$ must also be an edge.
By symmetry, we can add $z$ and $v$ to $F$ (see Figure \ref{fig:4flower}).
Because $n>8$, there is an unpictured vertex $d$ and, without loss of generality, we assume that $cd$ is an edge.

Since $G$ does not contain a chorded $C_4$, $cw$ is not an edge.
But then $dw$ is an edge, because otherwise $dczyxw$ is an induced $P_6$.
Similarly, $db$ must be an edge to avoid $dcvxyb$ being an induced $P_6$.
Now $ad$ is also an edge, because otherwise $axvzbd$ is an induced $P_6$.
We add $d$ to $F$ because $\Delta(G)\leq4$.
Thus $F=\{x,y,z,v,d\}$.
Now $\{d,b,c,w\}$ induces a claw, and because $dbzcd$ and $dcvwd$ are 4-cycles that are not chorded, it follows that $bw$ is an edge.
Similarly, to prevent $\{d,a,b,c\}$ from inducing a claw, we must have the edge $ac$.
But now every vertex of $G$ has 4 neighbors, so $F=V(G)$.
Thus $n=9$, which is a contradiction.

\begin{figure}[h]
\begin{minipage}{.5\textwidth}
\centering
\includegraphics[width=3.2cm]{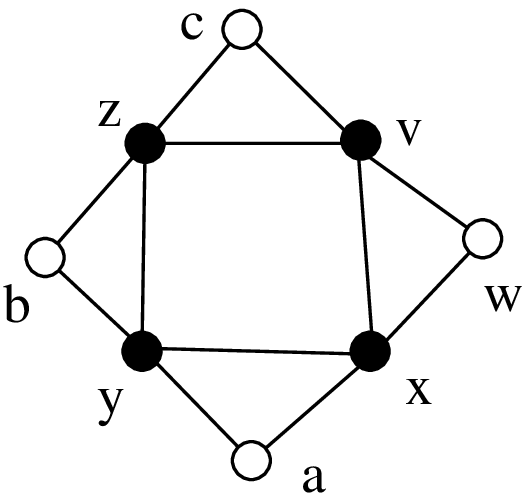}
\caption{For Subcase 2.1 of Lemma \ref{lem:P6C4*}.}
\label{fig:4flower}
\end{minipage}%
\begin{minipage}{.5\textwidth}
\centering
\vspace{0.29cm}
\includegraphics[width=3.1cm]{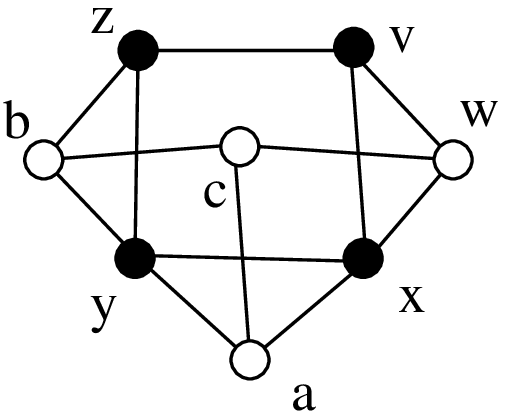}
\vspace{0.14cm}
\caption{For Subcase 2.2 of Lemma \ref{lem:P6C4*}.}
\label{fig:3flower}
\end{minipage}
\end{figure}

\noindent{\it Subcase 2.2.} Suppose that both $z$ and $v$ can be added to $F$.

The current state of $G$ (except for the vertex $c$, which we describe next) is given in Figure $\ref{fig:3flower}$.
Because $G$ is 2-connected, there is a $k$-tab with endpoints in the set $\{b,a,w\}$.
Thus, without loss of generality, we may assume that $cb$ is an edge.
To avoid $cbzvxa$ inducing a $P_6$, either $ba$ or $ca$ must be an edge.  
However, $ba$ gives a chorded $C_4$ so we may assume that $ca$ is an edge.
Because $cayzvw$ cannot be an induced $P_6$, and $aw$ creates a chorded $C_4$, it follows that $cw$ must also be an edge (see 
~{Figure \ref{fig:3flower}}).

But now $\{c,a,b,w\}$ cannot induce a claw, so $G$ contains at least one of $\{ab, aw, bw\}$.
Byecause $cbyac$ and $caxwc$ are 4-cycles that are not chorded, it follows that $bw$ is an edge.
Since $\Delta(G)\leq4$, we add $b$ and $w$ to $F$.
The only unfrozen vertices are $c$ and $a$.
If $c$ has a neighbor $d$ then $dcbzvx$ is an induced $P_6$, and if $a$ has a neighbor $d$ then $dayzvw$ is an induced $P_6$.
Therefore it follows that $n=8$, which is a contradiction.
\end{proof}


\begin{lem}\label{lem:P6C6*}
Let $G$ be a 2-connected graph of order $n\geq13$.  If $G$ is $\{K_{1,3},P_6\}$-free and $G$ contains a chorded $C_5$, then it also contains a chorded $C_6$.
\end{lem}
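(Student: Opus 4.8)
The plan is to follow the template of Lemmas~\ref{lem:P6C5*} and~\ref{lem:P6C4*}. Fix the hypothesized chorded $5$-cycle $C=vwxyzv$ with chord $vx$, so that $G[V(C)]$ is a triangle $vwx$ together with the path $xyzv$, and suppose for contradiction that $G$ has no chorded $C_6$. By Lemma~\ref{lem:k+1cc} a vertex of degree at least $9$ already yields a chorded $C_6$, so $\Delta(G)\le 8$; more usefully, by Lemma~\ref{lem:clique} no neighborhood may contain a clique on $5$ vertices (such a clique gives a chorded $C_6$). Since $G$ is 2-connected and $n>5$, the cycle $C$ has a $k$-tab; choosing one of minimum $k$, the usual induced-$P_6$ argument (prepend to the tab a neighbor of $a_0$ on $C$) forces $k\le 3$.

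Next I would dispatch the short tabs, the point being that the chord $vx$, or a pentagon edge forced by claw-freeness, survives into a longer cycle. For a $1$-tab $a_0a_1a_2$, whichever edge of the chorded pentagon its endpoints span, rerouting that edge through $a_1$ yields a $6$-cycle on which one pentagon edge becomes a chord; a short check of the endpoint positions (using that $a_1$ has no other cycle-neighbor, else a shorter tab) closes every case. A $2$-tab closes as well: its $6$-cycle uses two interior vertices and four vertices of $C$, leaving room for a pentagon edge (typically $vw$ or $vx$) to appear as a chord, and where this is not immediate a claw centered at an endpoint forces a pentagon diagonal that supplies the chord. Hence, under our assumption, $C$ has neither a $1$-tab nor a $2$-tab, and the minimal tab is a $3$-tab $Q=a_0a_1a_2a_3a_4$ whose interior is an induced path meeting $C$ only at $a_0$ and $a_4$.

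The heart of the argument is the $3$-tab. Because the interior vertices of $Q$ carry no chords, any $6$-cycle through $Q$ must consist of the five vertices of $Q$ plus a single common neighbor $p\in V(C)$ of $a_0$ and $a_4$, and such a cycle is chorded exactly when $a_0a_4\in E(G)$. The analysis therefore splits on the position of $\{a_0,a_4\}$ in the pentagon. When $a_0a_4$ is a pentagon edge (or the chord), I would produce $p$ directly; in the subcases where $a_0$ and $a_4$ happen to have no common neighbor, a claw centered at $a_0$ (or $a_4$) forces a pentagon diagonal, since every other way to destroy that claw would be a forbidden $1$-tab, and this diagonal supplies $p$ and completes a chorded $C_6$.

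The remaining and hardest cases are the $3$-tabs whose endpoints are \emph{non-adjacent} in the pentagon; up to the reflection $v\leftrightarrow x,\ y\leftrightarrow z$ these are $\{a_0,a_4\}=\{v,y\}$ and $\{a_0,a_4\}=\{w,y\}$. Here the tab-$6$-cycle is unavoidably chordless, so one cannot win directly. In each case I would first note $a_0a_4\notin E(G)$ (its presence would re-chord a tab-$6$-cycle), and then observe that prepending a cycle-neighbor of $a_0$ to $Q$ gives an induced $P_6$ unless a specific pentagon chord (e.g.\ $wy$) is present, so $P_6$-freeness forces that chord; claws centered at $v$ and $y$ then force the two remaining diagonals, collapsing $G[V(C)]$ to $K_5-vy$ (two copies of $K_4$ sharing the triangle $wxz$), with every cycle vertex of degree $4$. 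Finishing from this rigid dense core is where the main obstacle lies: with only the weak bound $\Delta(G)\le 8$ one cannot terminate as quickly as in the $C_4$ case, so I would continue the freezing technique, introducing each new vertex guaranteed by 2-connectivity, forcing its adjacencies by claw-freeness together with the ``no $K_5$ in a neighborhood'' consequence of Lemma~\ref{lem:clique} and by $P_6$-freeness, and exploiting the symmetry of the growing union of $K_4$'s to add whole orbits of vertices to $F$ at once. The aim is to show that the forced structure saturates---all of $V(G)$ becomes frozen---at some order strictly below $13$, contradicting $n\ge 13$; equivalently, any further growth eventually produces a degree-$9$ vertex or a $K_5$ inside a neighborhood, hence a chorded $C_6$. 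Verifying that this saturation always occurs below the threshold, across the two non-adjacent configurations, is the real work and explains the hypothesis $n\ge 13$.
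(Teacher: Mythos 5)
Your skeleton tracks the paper's own proof closely and is sound as far as it goes: a minimal $k$-tab on the chorded pentagon, ruling out $k\ge 4$ via an induced $P_6$, killing $1$-tabs and $2$-tabs by letting claw-freeness (plus tab-minimality) force pentagon diagonals that chord a $6$-cycle, and, for $3$-tabs whose endpoints are adjacent in $G[V(C)]$ (including the chord pair $\{v,x\}$), building the chorded $6$-cycle from the five tab vertices plus a common neighbor. Your reduction of the remaining cases to $\{a_0,a_4\}=\{w,y\}$ or $\{v,y\}$, and the deduction that claw- and $P_6$-freeness force $G[V(C)]$ to be $K_5$ minus the edge joining the tab endpoints (two $K_4$'s glued along a triangle, the paper's Figure \ref{fig:SameSame}), is exactly what the paper does in its Case 3.

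However, there is a genuine gap precisely where you admit one: the endgame from the $K_5-e$ core is announced as a strategy but never executed, and it constitutes more than half of the paper's proof. Concretely, the paper first freezes the three vertices of the shared triangle (any outside neighbor of one of them forces a claw whose resolution chords a $6$-cycle), and then splits on whether the two tab endpoints can be frozen. If they can (the paper's Subcase 3.1), any attachment vertices $b,c$ on the tab interior are forced into a second $K_5-e$ on $\{a_1,a_2,a_3,b,c\}$, freezing all of $V(G)$ at $n=10$. If not (Subcase 3.2), a neighbor $b$ of $w$ is forced to satisfy $N(b)\supseteq\{w,a_1,a_2\}$; any further neighbor $b'$ of $b$ must be a twin with $N_G[b']=N_G[b]$; after freezing $w$, the new vertices $c,c'$ demanded by $n>10$ are forced into a third $K_5-e$ on $\{y,a_2,a_3,c,c'\}$, freezing all of $V(G)$ at $n=11$ or $12$. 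It is these forced saturations at $n\le 12$ that produce the contradiction with $n\ge 13$ and explain the sharp threshold; none of this is routine (the twin observation and the repeated exploitation of $K_5-e$ symmetry to freeze whole orbits are the actual content), so without it the lemma is not proved. Note also that your global bound $\Delta(G)\le 8$ and the ``no $K_5$ in a neighborhood'' principle play no role in the termination: the paper never uses them, and no degree bound alone can cap $n$ --- only the explicit forced-structure analysis does.
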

\begin{proof}
Let $G$ be a 2-connected graph of order $n\geq13$ that is $\{K_{1,3},P_6\}$-free, and let $C=vwxyzv$ be a $C_5$ with chord $vx$.
Suppose, for the sake of contradiction, that $G$ does not contain a chorded $C_6$.
Because $G$ is 2-connected and $n>5$, there is a $k$-tab on the cycle $C$.  
We choose a tab $Q=a_0a_1\cdots a_{k+1}$ that minimizes $k$.
Over all minimal $k$-tabs, choose one which minimizes the distance from $a_0$ to $a_{k+1}$ on $C$ . 
If $k\geq4$ then let $a_0'$ be a neighbor of $a_0$ on $C$ which is not $a_{k+1}$; now $a_0'a_0a_1\cdots a_4$ is an induced $P_6$.  Therefore $1\leq k\leq3$.
\vspace{.1in}

\noindent{\it Case 1.} Suppose that $Q$ is a 1-tab.

If $a_0a_{2}$ is an edge of $C$, then this edge is the chord of a 6-cycle.  
Therefore $a_0$ and $a_{2}$ are not neighbors on $C$.
If $a_0=w$ then by symmetry we may assume that $a_{2}=y$.  
Now $wa_1yzvxw$ is a $C_6$ with chord $vw$.
Therefore, we may assume without loss of generality that $a_0=x$ and $a_{2}=z$.  
By minimality of $Q$, neither of $va_1$ and $ya_1$ can be edges.  
Therefore $vy$ is an edge to avoid $\{z,a_1,v,y\}$ inducing a claw.
But now $vwxa_1zyv$ is a $C_6$ with chord $vz$.
\vspace{.1in}

\noindent{\it Case 2.} Suppose that $Q$ is a 2-tab.

We first consider when one endpoint of $Q$, say $a_0$ is $y$ (or $z$ by symmetry). 
 Because $\{y,a_1,z,x\}$ cannot induce a claw, $zx$ is an edge by minimality of $Q$. 
 If $a_{3}=x$ or if $a_{3}=z$ then we have a 6-cycle on the vertices $\{y,a_1,a_2,x,v,z\}$ with chord $zx$.
If $a_{3}=w$ then $ya_1a_2wvxy$ is a 6-cycle with chord $wx$, and if $a_{3}=v$ then $ya_1a_2vwxy$ is a 6-cycle with chord $vx$.
Therefore both $a_0$ and $a_{3}$ must be in $\{v,w,x\}$.

We may assume without loss of generality that $a_0$ is $x$ (or $v$ by symmetry). 
Now if $a_{3}=v$ then $xa_1a_2vzyx$ is a 6-cycle with chord $vx$, so it follows that $a_{3}=w$.
Because $\{x,y,v,a_1\}$ cannot induce a claw in $G$, and both $ya_1$ and $va_1$ create 1-tabs on $C$, it follows that $vy$ must be an edge.
But now $xa_1a_2wvyx$ is a 6-cycle with chord $vx$.
\vspace{.1in}

\noindent{\it Case 3.} Suppose that $Q$ is a 3-tab. 

Let $R=\{v,w,x,y,z,a_1,a_2,a_3\}$.
We first show that $a_0$ and $a_{4}$ cannot be neighbors in $C$.  
Suppose that $a_0$ and $a_{4}$ are neighbors in $C$. 
If $a_0=y$ then to avoid a claw at $y$ or a 1-tab on $C$, $zx$ must be an edge.  
But now the edges $\{yx,zy,xz\}$ along with $Q$ form a chorded 6-cycle.
The same contradiction arises if $a_0=z$.  
So if $a_0$ and $a_4$ are neighbors in $C$ then they both come from the set $\{v,w,x\}$. 
But now the edges $\{vw,wx,xv\}$ along with $Q$ form a chorded 6-cycle.  
Therefore we may assume that $a_0$ and $a_4$ are not neighbors in $C$.

Up to symmetry, the 3-tab $Q$ has $a_4=y$ and either $a_0=w$ or $a_0=v$.
Note that any other edge between $C$ and $\{a_1,a_2,a_3\}$ will create a 1- or 2-tab and contradict minimality.
Suppose first that $a_0=w$.
To avoid an induced claw centered at $y$ we must have the edge $xz$.
Also, $vwa_1a_2a_3y$ cannot be an induced $P_6$ so there must be another edge among these vertices.
Because $wy$ creates a chorded 6-cycle $wa_1a_2a_3yxw$, we must have the edge $vy$.
As $wy$ is not an edge and $wa_1a_2a_3yz$ cannot be an induced $P_6$, we also have the edge $zw$.
Now $G[\{v,w,x,y,z\}]=K_5-wy$ where $w$ and $y$ are the endpoints of $Q$ (see Figure \ref{fig:SameSame}).
If $R$ induces any other edge in $G$, then there is a chorded 6-cycle.

We suppose instead that $a_0=v$ and show that this gives an isomorphic graph to the graph in Figure \ref{fig:SameSame}.
Note that the edge $yv$ would create a 6-cycle $yxva_1a_2a_3y$ with chord $yv$.
To avoid 1-tabs or an induced claw on $\{y,a_3,x,z\}$ we must have the edge $xz$.
To avoid $wva_1a_2a_3y$ being an induced $P_6$, we must have the edge $wy$ (other edges would contradict the minimality of $Q$).
Further, $zw$ is an edge because $\{v,a_1,z,w\}$ cannot be an induced claw.
Now $G[V(C)]=K_5-e$, so we may continue using the notation used in Figure \ref{fig:SameSame}.

\begin{figure}[h]
\centering
\includegraphics[width=1.7in]{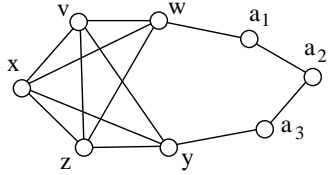}
\caption{The graph $G[R]$ in Case 3 of Lemma \ref{lem:P6C6*}. Note that $G[V(C)]=K_5-wy$.}
\label{fig:SameSame}
\end{figure}

We claim that $x$ can be added to $F$ (and $v$ and $z$ by symmetry).
Suppose instead that there is a vertex $b\in V(G)-R$ with edge $bx$.
Because $\{x,b,w,y\}$ cannot be an induced claw, either $bw$ or $by$ is an edge.
In the first case $xbwzvyx$ is a 6-cycle with chord $xw$, and in the second case $xbyvzwx$ is a 6-cycle with chord $xy$.
Thus, $x,v,z$ are all added to $F$.
\vspace{.1in}

\noindent{\it Subcase 3.1.} Suppose that $w$ and $y$ can be added to $F$.

Now $F=\{x,v,z,w,y\}$.
There is a tab on $G[R]$ because $G$ is 2-connected and $n>8$.
Thus, without loss of generality, there is a vertex $b$ such that $ba_1$ (or $ba_3$ by symmetry) is an edge.  
To avoid an induced claw $\{a_1,w,a_2,b\}$, we must also have the edge $ba_2$.  
Because $ba_2a_3yxw$ cannot be an induced $P_6$ and $y,x,w\in F$, we also have the edge $ba_3$.
Because there is no 2-tab on $C$, $G[R\cup\{b\}]$ induces no other edges.
However $n>9$, so there must be another vertex $c$.  
If $a_1$ and $a_3$ can now be added to $F$, then $\{a_1,a_3,c\}$ and their common neighbor induce a claw.
So we may assume that $ca_1$ is an edge (or $ca_3$ by symmetry).
To avoid  inducing claws with $\{a_1,c,w,a_2\}$ or $\{a_1,c,w,b\}$ both $ca_2$ and $cb$ must be edges.  
Because $cba_3yxw$ cannot be an induced $P_6$, $ca_3$ must also be an edge.  
Now $G[\{a_1,a_2,a_3,b,c\}]=K_5-e$ and by symmetry, $F=V(G)$.
Therefore $n=10$, which is a contradiction.
\vspace{.1in}

\noindent{\it Subcase 3.2.} Suppose that at least one of $w$ and $y$ cannot be added to $F$.

Recall that the current state of $G$ is described in Figure \ref{fig:SameSame} and that $F=\{v,x,z\}$.
We may assume without loss of generality that $bw$ is an edge in $G$ for $b\in V(G)-R$.
To avoid $\{w,b,v,a_1\}$ inducing a claw, $ba_1$ must also be an edge.
Now $ba_1a_2a_3yz$ is an induced $P_6$ unless $b$ has another neighbor in $R$.
Because $by$ and $ba_3$ create 1- and 2-tabs respectively, we must have the edge $ba_2$.
Note that $b$ has no other neighbors in $R$.

We claim that, if $b$ has a neighbor $b'\in V(G)-R$, then $N_G[b]=N_G[b']$.
Suppose that $b$ has a neighbor $b'\in V(G)-R$.
Note that $b'y$ makes $wbb'y$ a 2-tab and that $b'a_3$ gives a 6-cycle $b'a_3a_2a_1wbb'$ with chord $a_1b$, so neither of these can be edges.
Thus $b'bwxya_3$ is an induced $P_6$ unless $b'w$ is an edge.
But now $\{w,v,a_1,b'\}$ is an induced claw unless $b'a_1$ is an edge, and also $b'wxya_3a_2$ is an induced $P_6$ unless $b'a_2$ is an edge.
Therefore $N_G[b]=N_G[b']$.

If $\deg_G(b)=3$ then $b$ (and $a_1$ by symmetry) are now added to $F$.
If $b$ has a neighbor $b'$ in $V(G)-R$ then $G[w,b,b',a_1,a_2]=K_5-wa_2$.
Thus, by symmetry with $G[V(C)]$ we conclude that $b,b',$ and $a_1$ are all added to $F$ now.
Whether or not $b'$ exists, call $R'=R\cup N_G[b]$.
If $w$ has a neighbor $c\in V(G)-R'$ then $\{w,c,v,b\}$ is an induced claw because both $v$ and $b$ are in $F$.
Therefore $w$ is added to $F$ now and the graph $G[R']$ is shown in Figure \ref{fig:C6Second}, where the solid vertices (and $b'$, if it exists) are all in $F$.

\begin{figure}[h]
\centering
\includegraphics[width=1.6in]{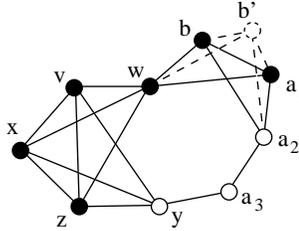}
\caption{This is $G[R']$.  Only $y,a_2,$ or $a_3$ can have neighbors in $G-R'$.}
\label{fig:C6Second}
\end{figure}

Because $n>10$  there must be a vertex $c\in V(G)-R'$.  
The only unfrozen vertices are $a_2,a_3$ and $y$.
If $ca_2$ is not an edge, then $cy$ and $ca_3$ must both be edges because $G$ is claw-free.  
But then $cyxwba_2$ would be an induced $P_6$.  
Therefore, $ca_2$ must be an edge.
To avoid $\{a_2,c,a_3,b\}$ inducing a claw, $ca_3$ is also an edge.  
Also $cy$ is an edge, because $ca_2bwxy$ cannot be an induced $P_6$.

Suppose that $c$ can be added to $F$ (and $a_3$ as well, by symmetry).
Because $n>11$ there is another vertex $d$ in $G$.
However the edge $yd$ makes $\{y,d,z,c\}$ induce a claw, and the edge $ya_2$ makes $\{a_2,d,a_1,c\}$ induce a claw.
Therefore $c$ cannot be added to $F$, and must have a neighbor $c'\in V(G)-R'$.
Because $c'cyxwb$ and $c'ca_2bwx$ cannot be induced $P_6$ subgraphs, both $c'y$ and $c'a_2$ must be edges.
Now $\{y,c',z,a_3\}$ cannot induce a claw, so $c'a_3$ is also an edge.
However, we now have $G[y,c,c',a_3,a_2]=K_5-ya_2$ and by symmetry $V(G)=F$.
It follows that $n=12$ (or $=11$ if $b'$ does not exist).
This is a contradiction because $n\ge13$.
\end{proof}

\begin{theorem}\label{thm:p6cpan} 
Let $G$ be a 2-connected graph of order $n\geq 13$.  If $G$ is $\{K_{1,3}, P_6\}$-free then $G$ is chorded pancyclic. 
\end{theorem}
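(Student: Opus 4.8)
The plan is to assemble the three supporting lemmas in exactly the spirit of the proof of Theorem \ref{thm:p5ch}. First I would establish that $G$ is pancyclic by appealing to Theorem \ref{thm:p6}, which gives that $G$ is either the cycle $C_n$ or pancyclic. Since $n\geq 13$, any copy of $C_n$ contains six consecutive vertices whose induced subgraph is a $P_6$ (the two ends are nonadjacent because they lie at distance $5$ on a cycle of length $\geq 7$), so the option $G=C_n$ is excluded by the hypothesis that $G$ is $P_6$-free. Hence $G$ is pancyclic; in particular $G$ contains a $C_4$.

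Next I would pin down chorded cycles of the short lengths $4$, $5$, and $6$ using the lemmas. Since $n\geq 13\geq 11$ and $G$ contains a $C_4$, Lemma \ref{lem:P6C5*} produces a chorded $C_5$. With a chorded $C_5$ in hand, Lemma \ref{lem:P6C4*} (applicable since $n\geq 10$) yields a chorded $C_4$, and Lemma \ref{lem:P6C6*} (applicable since $n\geq 13$) yields a chorded $C_6$. Note that this route obtains the chorded $C_4$ indirectly, by first passing through the chorded $C_5$, mirroring the structure of the $P_5$ argument. This handles every length $m$ with $4\leq m\leq 6$.

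For the remaining lengths $m\geq 7$ I would argue directly from the forbidden-subgraph condition. Pancyclicity guarantees an $m$-cycle for each such $m$; if any such cycle were chordless, then any six consecutive vertices along it would induce a $P_6$, precisely because the absence of chords together with $m\geq 7$ forces the two endpoints to be nonadjacent. This contradicts that $G$ is $P_6$-free, so every $m$-cycle with $m\geq 7$ is necessarily chorded. Combining the three short cases with this observation produces a chorded $m$-cycle for every $m$ in the range $4\leq m\leq n$, which is exactly chorded pancyclicity.

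The assembly itself is routine; the genuine difficulty resides entirely in the three supporting lemmas, whose proofs rest on delicate tab-minimality and frozen-set arguments. Among these, the chorded-$C_5$ lemma (Lemma \ref{lem:P6C5*}) and the chorded-$C_6$ lemma (Lemma \ref{lem:P6C6*}) carry the heaviest case analysis, tracking how a minimal $k$-tab can attach to the cycle and repeatedly invoking Lemma \ref{lem:clique} together with the vertex-transitivity of small blocks such as $K_3\,\square\, K_2$ to freeze vertices and force a bounded order. Since those lemmas are already established, the theorem follows immediately by the chaining described above.
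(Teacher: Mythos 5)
Your proposal is correct and follows essentially the same route as the paper: invoke Theorem \ref{thm:p6} for pancyclicity, chain Lemma \ref{lem:P6C5*} (chorded $C_5$ from a $C_4$) with Lemmas \ref{lem:P6C4*} and \ref{lem:P6C6*} to cover lengths $4$ through $6$, and note that any chordless $m$-cycle with $m>6$ would contain an induced $P_6$. Your explicit verification that $G\neq C_n$ (since $C_n$ with $n\geq 13$ contains an induced $P_6$) fills in a detail the paper leaves implicit, but it is the same argument.
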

\begin{proof}
By Theorem \ref{thm:p6}, we know that $G$ is pancyclic.  
Then by applying Lemma \ref{lem:P6C5*}, followed by Lemmas \ref{lem:P6C4*} and \ref{lem:P6C6*}, we find a chorded $m$-cycle in $G$ for $4\leq m\leq6$.
Any chordless $m$-cycle for $m>6$ contains an induced $P_6$.  
Therefore $G$ contains a chorded $m$-cycle for $4\leq m\leq n$
\end{proof}

Figure \ref{fig:P6Sharp} shows a 12-vertex, 2-connected, claw-free, and $P_6$-free graph which is not chorded pancyclic, because there is no chorded $6$-cycle.  This proves that Theorem \ref{thm:p6cpan} is sharp.

\begin{figure}[h]
\centering
\includegraphics[width=4cm]{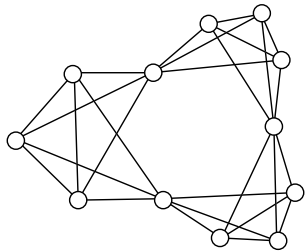}
\caption{This graph has no chorded $C_6$, which shows that Theorem \ref{thm:p6cpan} is sharp.}
\label{fig:P6Sharp}
\end{figure}

\end{document}